\documentclass[12pt]{amsart}

\usepackage[iso-8859-7]{inputenc}
\usepackage{amsthm}
\theoremstyle{plain}
\newtheorem*{theorem*}{Theorem}
\usepackage{amsmath}
\usepackage{xcolor}
\usepackage{verbatim}

\newtheorem{theorem}{Theorem}
\newtheorem{lemma}{Lemma}
\newtheorem{corollary}{Corollary}[section]
\theoremstyle{definition}

\newtheorem{proposition}{Proposition}
\theoremstyle{remark}
\newtheorem{remark}{Remark}

\numberwithin{equation}{section}

\newcommand{\abs}[1]{\lvert#1\rvert}
               \def\g{\gamma}
\def\d{\delta}            
              
                  \def\z{\zeta}
               \def\vf{\varphi}

\def\e{\varepsilon}       \def\vf{\varphi}

\def\D{{\mathbb D}}  
\def\C{{\mathbb C}}

\def\({\left(}       \def\){\right)}

\newcommand{\n}[1]{\Vert#1\Vert}

\newtheorem{thmx}{\bf Theorem}

\usepackage{enumerate}
\numberwithin{equation}{section}
\usepackage{mathtools}
\usepackage{amsmath,amssymb}
\usepackage[a4paper, left=3.5cm,right=3.5cm,top=2.5 cm,bottom=3 cm,]{geometry}
\DeclarePairedDelimiter{\norm}{\lVert}{\rVert}

\date{\today}

\begin{document}

\title[Composition Semigroups and Integral Operators in $BMOA_p$ spaces]{Semigroups of Composition
operators and Integral operators in BMOA-type spaces}
\author{V. Daskalogiannis}
\address{Dept. of Mathematics, Aristotle University of Thessaloniki, 54124, Greece }
\email{V. Daskalogiannis: vdaskalo@math.auth.gr}
\author{P. Galanopoulos}
\address{Dept. of Mathematics, Aristotle University of Thessaloniki, 54124, Greece }
\email{P. Galanopoulos: petrosgala@math.auth.gr}

\subjclass[2010]{Primary: 46E15, 47B33, 47D06, 47G10 ; Secondary: 30H25, 30H30, 30H35, 32A37}
\keywords{Semigroups, composition operators, M\"{o}bius invariant, BMOA, Volterra operator, compact, weakly compact}

\begin{abstract}
The aim of this article is to study semigroups of composition operators $T_t=f\circ\phi_t$ on the BMOA-type spaces
$BMOA_p$, and on their "little oh" analogues  $VMOA_p$. The spaces $BMOA_p$ were  introduced by R. Zhao
as part of the large family of $F(p,q,s)$  spaces, and are the M\"{o}bius invariant subspaces of the
Dirichlet spaces $D^p_{p-1}$.

We study the  maximal subspace $[\phi_t, BMOA_p]$ of strong
 continuity, providing a sufficient condition on the infinitesimal generator of $\{\phi_t\}$, under which $[\phi_t,
 BMOA_p]=VMOA_p$, and a related necessary condition in the case where the Denjoy - Wolff point of the
  semigroup is in $\D$.  Further,  we  characterize those semigroups, for which $[\phi_t, BMOA_p]=VMOA_p$,
  in terms of the resolvent operator of the  infinitesimal generator  of $(T_t|_{VMOA_p})$.  In addition we provide a  connection between the maximal  subspace of strong  continuity and the  Volterra-type operators $T_g$. We characterize the symbols $g$ for which $T_g:\,BMOA\to BMOA_1$ is bounded or  compact, thus extending a related result to the case $p=1$. We also prove that for $1<p<2$ compactness of $T_g$ on $BMOA_p$ is  equivalent to weak compactness.
\end{abstract}

\maketitle

\section{Introduction}
Let $\D$ be the unit disc of the complex plane and $\partial \D$ its boundary.
We will denote by $H(\D)$ the space of all analytic functions on the disc and by $H^\infty$
the subspace of bounded analytic functions. A family $\{\phi_t\}_{t\geq 0}$ of analytic
self-maps of the disc is a semigroup of functions if:

\begin{enumerate}
\item $\phi_0(z)=z$

\item $\phi_t\circ \phi_s=\phi_{t+s}$,\; for $t,\, s\,\geq\,0$

\item $\phi_t \to \phi_0$, uniformly on compact subsets of $\D$, as $t\to 0$.
\end{enumerate}
The identity maps $\,\phi_t(z)=z,\;t\geq 0 ,\,$ form the trivial semigroup. In any other case we
say $\{ \phi_t \}_{t\geq 0}$ is
nontrivial. Each family $\{\phi_t\}_{t\geq 0}\,$ induces a semigroup of composition operators on $H(\D)$,
\[
T_t(f)\,=\,f\circ \phi_t\,,\; t\geq0\,.
\]
If   $X$ is   a Banach space of analytic functions on $\D$ on which the composition operators   $T_t: X\to X$
are bounded, there arises the question whether $(T_t)$ is strongly continuous, that is if
\[
\lim_{t \to 0^+}\, \norm{T_t(f) - f}_X\,=\,0,\;\;f\in X.
\]

This  question of strong continuity  was first studied by E. Berkson and H. Porta \cite{Berkson}
on Hardy spaces. They proved that each $\{\phi_t\}$ induces a strongly
continuous composition semigroup $(T_t)$  on each Hardy space ${H^p,\; 1 \leq p < \infty}$.  Several other
 authors studied the question  on classical spaces of
analytic functions, in particular on Bergman spaces, the Dirichlet space, the space BMOA and the Bloch
space among others.

This article studies strong continuity of composition semigroups in the family of spaces $BMOA_p,\;1\leq p<\infty$.
 When $p\geq 2$ it is immediate that   every semigroup
$\{\phi_t\}$ induces a semigroup $(T_t)$ of bounded composition operators on $BMOA_p$. We show that the same is true in
the non-obvious case $1\leq p<2$ and also for $VMOA_p$. We then study the maximal subspace $[\phi_t, BMOA_p]$ of
strong continuity on these spaces. We show that $[\phi_t, BMOA_p]\subsetneqq BMOA_p$ for nontrivial $\{\phi_t\}$ and
 provide  conditions under which  $[\phi_t, BMOA_p]=VMOA_p$. We also  consider Volterra-type
  operators $T_g$ on the above spaces  and  characterize the symbols $g$ for which $T_g:\,BMOA\to BMOA_1$ is
bounded or  compact, extending a  result from   \cite{YuTo} to the case $p=1$. The key point for proving this extension is the use of the Garsia norm for functions in $BMOA$. 
 Finally we prove that for  $1<p< 2$
compactness of $T_g$ on $BMOA_p$ is equivalent to its weak  compactness, and find a connection between
 the  maximal subspace of strong continuity and  the mapping properties
 of $T_{\gamma}$ for a specific symbol $\gamma$ determined by the inducing semigroup.

\section{Background and Main Results}

 \subsection{The spaces $BMOA_p$} Let $0<p<\infty$. A function $f\in H(\D)$
 belongs to the space $BMOA_p$ if
\begin{equation}\label{def}
\sup_{a\in\D}\, \int_{\D} | f'(z)|^p\,(1-|z|^2)^{p-2}(1-|\vf_a(z)|^2)\,dm(z) < \infty,
\end{equation}
where $\vf_a(z)= \frac{a-z}{1-\bar{a}z}$, $a\in\D$,
are the M\"{o}bius automorphisms of the disc and  $dm(z)=rdrd\theta/\pi$ is
the normalized  Lebesgue area
measure of $\D$.
The subspace $VMOA_p$ contains those $f$ for which
\begin{equation}\label{vmoa}
\lim_{| a|\to 1}\int_{\D} | f'(z)|^p(1-| z |^2)^{p-2}(1-|\vf_a(z)|^2) \,dm(z) = 0\,.
\end{equation}

The spaces $BMOA_p$ are part of the large family of spaces $F(p, q, s)$   introduced by R. Zhao in \cite{Zhao}.
An  $f\in H(\D)$ is in $F(p, q, s)$ if
\[
\sup_{a\in\D}\, \int_{\D} | f'(z)|^p(1-| z|^2)^q  (1-|\vf_a(z)|^2)^s\,dm(z) < \infty\,,
\]
and $f\in  F_0(p, q, s)$  if the above integral   tends to $0$ as $|a|\to 1$. The parameters are $p>0,\, q>-2,\, s>0$,
and the spaces are nontrivial when $q+s>-1$.
These spaces  were  further studied by J. R\"{a}tty\"{a} in \cite{Ratt}.

Clearly
$$
BMOA_p \equiv F(p,\, p-2,\,1) \;\;\textrm{and}\;\;VMOA_p \equiv F_0(p,\, p-2,\,1),
$$
and   the value $p=2$  corresponds to the  classical spaces $BMOA$  and $VMOA$.

We recount  some basic properties of $BMOA_p$  most of which can be found in  \cite{Zhao}. For $p\geq 1$, $BMOA_p$ equipped with the norm defined by
\begin{equation}\label{norm1}
\n{f}_{BMOA_p}^p=|f(0)|^p + \sup_{a\in\D}\int_{\D} |f'(z)|^p(1-|z|^2)^{p-2}(1-|\vf_a(z)|^2) \,dm(z)
\end{equation}
is a Banach space, with  $VMOA_p$ being the closure  of the
polynomials in $BMOA_p$.   The identity
$$
1-| \vf_a(z) |^2= |\vf_a'(z) | (1-| z|^2), \quad a,z\in \D,
$$
and a change of variables $z=\vf_a(w)$  gives
\begin{equation}\label{norm2}
\n{f}_{BMOA_p}^p=|f(0)|^p+ \sup_{a\in\D}\n{f\circ \vf_a - f(a)}^p_{D^p_{p-1}}.
\end{equation}
where
$$
\n{f}^p_{D^p_{p-1}}=|f(0)|^p + \int_{\D} |f'(z)|^p (1-|z|^2)^{p-1}dm(z)
$$ is a norm defining  the Dirichlet
spaces $D^p_{p-1}:=\{f\in H(\D): \n{f}_{D^p_{p-1}}<\infty\}$. Thus (\ref{norm2})  says that $f\in BMOA_p$ if and only if
the set of hyperbolic translates
$$
\left\{f\circ \vf_a-f(a): a\in \D\right\}
$$
is bounded in  $D^p_{p-1}$, i.e. $BMOA_p$ is the M\"{o}bius invariant version of $D^p_{p-1}$.  For more information on
$D^p_{p-1}$ see \cite{GiPe}, \cite{Vin}.

It is to be noticed at this point that the spaces $D^p_{p-1}$ and $BMOA_p$ are part of the  larger families  of spaces
$D^p_{\mu}$ and their M\"{o}bius invariant subspaces   $M(D^p_{\mu})$ respectively, which were studied  by M. D. Contreras, S.
D\'iaz-Madrigal and D. Vukoti\'c in \cite{cdmv}. Specifically, given a positive Borel measure $\mu$ on $\D$ consider the
Dirichlet-type spaces
$$
D^p_{\mu}=\{f\in H(\D): \int_{\D}|f'(z)|^p\,d\mu(z)<\infty\}
$$
and their Mobius invariant versions
$$
M(D^p_{\mu})=\{f\in D^p_{\mu}:\sup_{a\in \D}\int_{\D}|(f\circ\vf_a)'(z)|^p\,d\mu(z)<\infty\},
$$
along with the little-oh spaces $ M_0(D^p_{\mu})$ containing those $f$ for which
$$
\lim_{|a|\to 1}\int_{\D}|(f\circ\vf_a)'(z)|^p\,d\mu(z)=0.
$$
The choice   $d\mu(z)=(1-|z|^2)^{p-1}dm(z)$ gives $D^p_{p-1}=D^p_{\mu}$,  and then 
\[
BMOA_p=M(D^p_{\mu}) \textrm{ and } VMOA_p=M_0(D^p_{\mu}).
\]

Even though there is  no inclusion relation between  $D^p_{p-1}$ and $ D^q_{q-1}$ for $p\neq q$, their M\"{o}bius invariant subspaces   $BMOA_p$ form a chain that increases in size with $p$, \cite{Zhao}. If $0<p<q<\infty$ then
$$
BMOA_p\subsetneq  BMOA_q \subsetneq \mathcal{B},
$$
where $\mathcal{B}$ is the Bloch space of $f\in H(\D)$  with $\sup\limits_{z\in\D}(1-|z|^2)|f'(z)|<\infty$. An analogous
containment relation  is valid for $VMOA_p$. For the sake of completeness we prove the following lemma which can also be proved using the closed graph theorem.
\begin{lemma}
\begin{enumerate}
\item For   each  $0<q<\infty$ there is a constant $C=C_q$ such that
 $\n{f}_{\mathcal{B}}\leq C\n{f}_{BMOA_q}$, for all $f\in BMOA_q$.\\
\item For  each pair $(p,q)$ with $0<p<q<\infty$ there is a constant $C=C_{p,q}$ such that $\n{f}_{BMOA_q} \leq   C\n{f}_{BMOA_p}$, 
 for all $f\in BMOA_p$.
 \end{enumerate}
\end{lemma}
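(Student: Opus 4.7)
The plan is to prove (1) directly by a sub-mean-value argument for the subharmonic function $|f'|^q$, and then derive (2) from (1) by a factoring trick using the Bloch norm estimate.

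For part (1), I would fix $a\in\D$ and restrict the defining integral of $\|f\|_{BMOA_q}$ to a Euclidean disc $D_a=D(a,r(1-|a|))$ for a small radius $r\in(0,1)$ to be chosen. On $D_a$ there are standard comparisons: $1-|z|^2\asymp 1-|a|^2$ with constants depending only on $r$ (which handles $(1-|z|^2)^{q-2}$ regardless of the sign of $q-2$), and one can estimate $|\vf_a(z)|\leq Cr$ on $D_a$, so that by choosing $r$ sufficiently small we ensure $1-|\vf_a(z)|^2\geq 1/2$ on $D_a$. This gives
\[
\|f\|_{BMOA_q}^q\;\geq\;c_1(1-|a|^2)^{q-2}\int_{D_a}|f'(z)|^q\,dm(z).
\]
Since $|f'|^q$ is subharmonic, the sub-mean-value property on $D_a$ yields
\[
|f'(a)|^q\;\leq\;\frac{1}{|D_a|}\int_{D_a}|f'(z)|^q\,dm(z),
\]
where $|D_a|\asymp (1-|a|^2)^2$. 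Combining the two inequalities gives $(1-|a|^2)^q|f'(a)|^q\leq c_2\|f\|_{BMOA_q}^q$ uniformly in $a$, and taking $q$-th roots and a supremum together with the trivial bound $|f(0)|\leq\|f\|_{BMOA_q}$ produces the desired Bloch estimate.

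For part (2), I would apply (1) with the exponent $p$ in place of $q$ to get $(1-|z|^2)|f'(z)|\leq C_p\|f\|_{BMOA_p}$ for every $z\in\D$. Factoring $q-p$ powers of $(1-|z|^2)|f'(z)|$ out of the $BMOA_q$ integral gives, for every $a\in\D$,
\[
\int_{\D}|f'(z)|^q(1-|z|^2)^{q-2}(1-|\vf_a(z)|^2)\,dm(z)\;\leq\;C_p^{q-p}\|f\|_{BMOA_p}^{q-p}\int_{\D}|f'(z)|^p(1-|z|^2)^{p-2}(1-|\vf_a(z)|^2)\,dm(z),
\]
which is at most $C_p^{q-p}\|f\|_{BMOA_p}^q$. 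Taking the supremum over $a\in\D$ and adding $|f(0)|^q\leq\|f\|_{BMOA_p}^q$ yields the inclusion $\|f\|_{BMOA_q}\leq C_{p,q}\|f\|_{BMOA_p}$.

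The only place requiring genuine care is the choice of radius in part (1): one must pick $r$ small enough that both $(1-|z|^2)\asymp(1-|a|^2)$ (to control the factor $(1-|z|^2)^{q-2}$ for $q<2$) and $1-|\vf_a(z)|^2\geq 1/2$ hold simultaneously on $D_a$, with constants independent of $a$. Once this calibration is made, part (2) reduces to a one-line factoring argument, so the Bloch estimate is the real engine of the lemma.
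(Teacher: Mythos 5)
Your argument is correct, and part (2) is exactly the paper's proof: factor $\left((1-|z|^2)|f'(z)|\right)^{q-p}$ out of the $BMOA_q$ integral, bound it by $\n{f}_{\mathcal{B}}^{q-p}$, and then invoke part (1). The only real divergence is in part (1). The paper does not prove the pointwise estimate $(1-|a|^2)^q|f'(a)|^q\leq K_q\,I(f,q,a)$ at all --- it cites it as Lemma 2.9 of Zhao's memoir and then just assembles the norm inequality. You instead prove that estimate from scratch: restrict the integral to the pseudo-hyperbolic disc $D_a=D(a,r(1-|a|))$, use $1-|z|^2\asymp 1-|a|^2$ and $|\vf_a(z)|\leq r/(1-r)$ there (so $1-|\vf_a(z)|^2\geq 1/2$ for small $r$), and apply the sub-mean-value property of the subharmonic function $|f'|^q$. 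This is precisely the standard proof of the cited lemma, so your version is self-contained where the paper's is not, at the cost of a page of routine calibration; the constants you track (dependence on $r$, hence on nothing but $q$) all come out uniform in $a$ as required. Both routes then conclude identically by combining the pointwise bound with $|f(0)|\leq\n{f}_{BMOA_q}$ and taking the supremum over $a$.
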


\begin{proof}
(1) For  $f\in BMOA_q$ and $a\in \D$ let
$$
I(f, q, a)= \int_{\D}|f'(z)|^q(1-|z|^2)^{q-2}(1-|\vf_a(z)|^2)\,dm(z).
$$
According to \cite[Lemma 2.9]{Zhao} there is a constant $K_q$ such that
$$
|f'(a)|^q(1-|a|^2)^q \leq K_q I(f, q, a),
$$
for all $a\in \D$. Thus we have
\begin{align*}
\left (|f(0)|+|f'(a)|(1-|a|^2)\right)^q &\leq 2^q\left(|f(0)|^q+|f'(a)|^q(1-|a|^2)^q\right)\\
&\leq 2^q\left(|f(0)|^q+K_qI(f, q, a)\right)\\
&\leq 2^q\max\{1, K_q\}(|f(0)|^q+I(f, q, a))\\
&\leq K_q'\n{f}_{BMOA_q}^q,
\end{align*}
and taking the $\sup$ on $a$ of the quantity in left hand side, we have the desired inequality with $C_q=(K_q')^{1/q}$.

(2) Let $f\in BMOA_p$   then for $I(f, q, a)$ as defined above we have
\begin{align*}
I(f, q, a)&=\int_{\D} \left((1-|z|^2)|f'(z)|\right)^p\left((1-|z|^2)|f'(z)|\right)^{q-p}
\frac{(1-|\vf_a(z)|^2)}{(1-|z|^2)^2}  \,dm(z)\\
&\leq  \sup_{z\in \D}\left((1-|z|^2)|f'(z)|\right)^{q-p}  \int_{\D} |f'(z)|^p(1-|z|^2)^p
\frac{(1-|\vf_a(z)|^2)}{(1-|z|^2)^2}  \,dm(z)\\
&\leq \n{f}_{\mathcal{B}}^{q-p}\int_{\D} |f'(z)|^p(1-|z|^2)^{p-2}(1-|\vf_a(z)|^2)  \,dm(z)\\
&\leq \n{f}_{\mathcal{B}}^{q-p} \n{f}_{BMOA_p}^p\\
&\leq C^{q-p}\n{f}_{BMOA_p}^{q-p}\n{f}_{BMOA_p}^p, \qquad (\text{by using (i)})\\
&=C^{q-p}\n{f}_{BMOA_p}^q.
\end{align*}
Thus
\begin{align*}
\n{f}_{BMOA_q}^p&=\left(|f(0)|^q+\sup_{a\in \D}I(f, q, a)\right)^{p/q}\\
&\leq 2^{p/q}\left(|f(0)|^p+\sup_{a\in \D}I(f, q, a)^{p/q}\right)\\
&\leq 2^{p/q}\left(\n{f}_{BMOA_p}^p+\left( C^{q-p}\n{f}_{BMOA_p}^q\right)^{p/q}\right)\\
&= 2^{p/q}(1+C^{\frac{(q-p)p}{q}})\n{f}_{BMOA_p}^p,
\end{align*}
and the conclusion follows with $C_{p,q}=(2^{p/q}(1+C^{\frac{(q-p)p}{q}}))^{1/q}$.
\end{proof}

Since $H^{\infty}\subset BMOA=BMOA_2$ it follows that
 $H^{\infty}\subset BMOA_p$ for $p\geq 2$,
but this containment is no longer true for $1\leq p<2$. In fact it is shown in  \cite[Proposition A
(7)]{PerRat} and  \cite[Theorem 2.3.4]{Ratt} that the disc algebra $\mathcal{A}$ is not contained in $ BMOA_p$ for
$p<2$.

Yet another  description of the spaces $BMOA_p$ is in terms of Carleson measures. Recall that a positive
measure $\mu$ on $\D$ is a Carleson measure if
\begin{equation}\label{defCar}
\n{\mu}_{CM}=\sup_{I\subset \partial\D} \frac{\mu(S(I))}{|I|} <\infty,
\end{equation}
where $I\subset \partial\D$ is an arc with length $|I|$,    and
$$
S(I)= \{re^{it}: e^{it}\in I, \, \text{and} \, 1-|I|<r<1\}
$$
is the Carleson square for  $I$. Further, $\mu$ is a vanishing Carleson measure if $\lim\limits_{|I|\to 0}\frac{\mu(S(I))}{|I|}=0$.
The defining property (\ref{defCar}) is equivalent to that
the Hardy space $H^2$ is boundedly embedded  in $L^2(\D, \mu)$, i.e. there is a constant $C_{\mu}$ such that
\begin{equation}\label{CarH2}
\int_{\D}|f(z)|^2\,d\mu(z)\leq C_{\mu}\n{f}_{H^2}^2
\end{equation}
for all $f\in H^2$. Vanishing Carleson measures are those for which the embedding  $i:H^2\to L^2(\D, \mu)$
is a compact operator.
It is shown in \cite{Zhao} that $f\in BMOA_p$ if and only if
$$
\sup_{I\subset \partial \D} \frac{1}{|I|}\int_{S(I)} |f'(z)|^p (1-|z|^2)^{p-1}\,dm(z)<\infty,
$$
equivalently the measure $ d\mu(z)=|f'(z)|^p(1-|z|^2)^{p-1}\,dm(z)$  is a Carleson measure on $\D$.
$VMOA_p$ then consists of those $f$ for which $d\mu$ is vanishing Carleson. A consequence is that
 $$
 |f(0)|^p+\sup_{I\subset \partial\D} \frac{\mu(S(I))}{|I|}\simeq\n{f}_{BMOA_p}^p.
 $$

 Using the above description, Z. Wu \cite{Wu} (see also \cite{Vin}) gave a characterization
 of pointwise multipliers of $D^p_{p-1}$, i.e. of functions in the space
$$
\mathcal{M}(D^p_{p-1})=\{g\in H(\D): gf\in D^p_{p-1} \,\,\text{for each} \, f\in D^p_{p-1}\}.
$$
This characterization  for $0<p\leq 2$ can be restated as follows
$$
g\in \mathcal{M}(D^p_{p-1}) \Leftrightarrow g\in H^{\infty}\cap BMOA_p.
$$

Another important class of spaces contained in family $F(p,q,s)$ are the $Q_s$ spaces, obtained as
$Q_s = F(2, 0, s)$ and $Q_{s,0}=F_0(2, 0, s)$.  They were
introduced by Aulaskari, Xiao and Zhao in \cite{AuXiZh}, see also  \cite{AuLa}. For $s=1$ we have  $Q_1= BMOA$, while
for all $s>1$, $Q_s=\mathcal{B}$. For $0<s<1$ these spaces are distinct and they increase in size   with $s$.
In \cite{AuTo}, Aulaskari and Tovar proved that
\[
 \bigcup_{s\in(0,1)} Q_s \subsetneq \bigcap_{0<p\leq2} BMOA_p,
\]
which indicates that $BMOA_p$ are considerably larger spaces than $Q_s$. A good reference on $Q_s$ spaces is Xiao's
monographs \cite{Xiao1} and \cite{Xiao2}.

 We note that in the literature, the spaces $BMOA_p$  are also referred to as
Besov-type spaces, see for example \cite{AuTo} or \cite{PerRat2}.

\subsection{Semigroups of composition operators}

 If $\{\phi_t\}_{t\geq 0}$ is a semigroup then each $\phi_t$ is univalent and the limit
\[
G(z)=\lim_{t\to 0^+}\frac{\phi_t(z)-z}{t}
\]
exists uniformly on compact subsets of $\D$. The analytic function $G(z)$ is
the \emph{infinitesimal generator} of $\,\{\phi_t\}\,$ and uniquely determines
the semigroup. Moreover $G$ satisfies
\begin{equation}\label{gen}
G(\phi_t(z))=\frac{\partial \phi_t(z)}{\partial t}\,=\,G(z)\,\frac{\partial \phi_t(z)}{\partial z},\;\;z\in\D\,,\;\;t\geq 0\,.
\end{equation}
The infinitesimal generator $G$ can be uniquely represented in terms of the Denjoy-Wolff point $b$ of
the semigroup (see \cite{Berkson}) as
\begin{equation}\label{D-W}
G(z)=(\bar{b}z -1)(z-b)P(z),\;\;z\in\D,
\end{equation}
where $b\in \overline{\D}$ and $P\in H(\D)$ with $Re\left(P(z)\right)\geq 0$ for all $z\in\D$ .

If $X$ is a Banach space consisting of analytic functions on $\D$ we denote by  $[\phi_t, X]$  the maximal closed subspace of
$X$ on which $\{\phi_t\}$ generates a strongly continuous composition semigroup $(T_t)$, that is,
$$
[\phi_t,\,X]\coloneqq \{ f\in X:\; \lim_{t\to 0^+}\n{f\circ \phi_t - f}_X =0 \}.
$$
It is shown in \cite{bcdmps} that if $X$ contains the constants and $\sup\limits_{0<t<1}\n{T_t}<\infty$, then
\begin{equation}\label{ssc}
[\phi_t,\,X]= \overline{\{f\in X: Gf'\in X\}}
\end{equation}
with $G$ the generator of $\{\phi_t\}$.
It is well known that for every semigroup $\{\phi_t\}$,
$$
[\phi_t,\,X]=X
$$
when $X$ is any of the following spaces: the Hardy spaces $H^p,\;1\leq p<\infty$
\cite{Berkson}, the Bergman spaces $A^p_a,\;1\leq p<\infty,\;a>-1$ \cite{Sisk3}, the Dirichlet space $\mathcal{D}$
\cite{Sisk4}, the space VMOA and  the little Bloch space $\mathcal{B}_0$ \cite{bcdmps},
and $Q_{s,0}$ \cite{WirthXiao}.
 Note that in each of the above spaces, polynomials are dense in $X$, a property
  that plays a key role in proving strong continuity.

This is no longer true when $X=H^\infty$ or $X=\mathcal{B}$. For these spaces and for nontrivial  $\{\phi_t\}$
we have,
\[
[\phi_t, H^\infty]\subsetneqq H^\infty \;\;\textrm{and}\;\;[\phi_t,\mathcal{B} ]\subsetneqq \mathcal{B},
\]
see for example \cite{bcdmps},\cite{Sisk}.  Extending this result, A. Anderson, M. Jovovic and W. Smith
 proved in \cite{AJS} that
whenever $X$ is a space such that
\[
H^\infty\subseteq X \subseteq \mathcal{B}
\]
then $[\phi_t , X] \subsetneqq X$. In  particular for $X=BMOA_p$, $p\geq 2$,
$$
[\phi_t , BMOA_p] \subsetneqq BMOA_p
$$
for all nontrivial $\{\phi_t\}$, since for such $p$ we have  $ H^{\infty}\subset BMOA_p\subset \mathcal{B}.$
However if  $1\leq p<2$ then $ H^\infty \not\subset BMOA_p$, so the
result of Anderson, Jovovic and Smith does not apply for this range of $p$ and the question of strong continuity needs a
different treatment.

On the other hand using the fact that the closure of the polynomials in $BMOA_p$ is $VMOA_p$ we will show
that each $\{\phi_t\}$ induces a strongly
continuous composition semigroup on $VMOA_p$, so  we  have
$$
VMOA_p\subseteq [\phi_t , BMOA_p]
$$
for $p\geq 1$. This containment can be proper as the following example shows.
Let
\[
f(z)=\log(1-z) \in BMOA_p\setminus VMOA_p
\]
 and
\[
 \phi_t(z) = e^{-t}z +1 -e^{-t},\quad t\geq 0.
 \]
Then  $ \lim_{t\to 0}\,\norm{f\circ \phi_t - f}_{BMOA_p}=0$, thus $f\in  [\phi_t , BMOA_p]$  so, in this case,
$$
VMOA_p\subsetneqq [\phi_t , BMOA_p].
$$
 Thus for a general semigroup $\{\phi_t\}$ the question arises to relate function-theoretic properties
of $\{\phi_t\}$ to the size of the maximal subspace of strong continuity of $(T_t)$.

\subsection{Main results}
In section 3 we study the boundedness of composition operators
$$
C_\psi (f) (z)=f (\psi (z)),
$$
 on $BMOA_p$ and $VMOA_p$, where $\psi$ is an analytic self-map of the disc.  It is well known that $C_\psi$
is  bounded on BMOA \cite{bmoa} and on $\mathcal{B}$ for all $\psi$, and it is not difficult to  check
that this remains true for  $BMOA_p$ for $p>2$. When $1\leq p<2$ we prove that if   $\psi$ is univalent
then  $C_{\psi}$  is bounded on $BMOA_p$ and on $VMOA_p$, and this  is sufficient for
our work with semigroups.

In section 4, we study semigroups of composition operators on $BMOA_p$ and $VMOA_p$, $p\in[1, 2)$.
It is easy to see that  for  the dilations $\phi_t(z) =e^{-t}z$ or the
 rotations $\phi_t=e^{it}z$, we have a proper inclusion
\[
[\phi_t, BMOA_p] \subsetneqq  BMOA_p.
\]
Indeed taking  any function $g\in BMOA_p\setminus VMOA$,
if we assume that $g \in [\phi_t, BMOA_p]$ then  $g \in [\phi_t, BMOA]$, since
$$
\lim_{t\to 0}\,\n{g\circ\phi_t - g}_{BMOA}\leq C \lim_{t\to 0}\, \n{g\circ\phi_t - g}_{BMOA_p},\quad 1\leq p <2.
$$
Then, by Sarason's theorem \cite{Sar}, $g \in VMOA$ which is a contradiction.
Examples of such functions are $g(z)=\log(1-z)$, and the class of inner functions
\[
g(z)=\exp\left( \gamma\,\frac{z+w}{z-w} \right)\,,
\]
where $0<\gamma<\infty$ and $w\in \partial \D$ (see \cite[Theorem 1.1]{PerRat}).
More generally using an argument similar to that in \cite{AJS} we will show that  for any nontrivial $\{\phi_t\}$
we always  have
$$
[\phi_t, BMOA_p] \subsetneqq  BMOA_p, \quad 1\leq p<2.
$$

On the other hand we will show that if  the infinitesimal generator $G$ of $\{\phi_t\}$ satisfies the condition
\begin{equation*}
\lim_{\vert I \vert \to 0} \frac{\left(\log\frac{2}{\vert I \vert}\right)^p}{\vert I \vert}
\int_{S(I)} \frac{(1-\vert z\vert^2)^{p-1}}{\vert G(z) \vert^p}\,dm(z)\,=\,0\,,
\end{equation*}
then $VMOA_p = [\phi_t, BMOA_p]$, and will observe that this condition is satisfied for a large family of semigroups.
We also prove  a (different) necessary condition for this equality, for semigroups with
Denjoy-Wolff point inside $\D$.

We finish the section by observing that  a characterization of the equality $[\phi_t, BMOA_p]=VMOA_p$ may be given in terms of
the  resolvent operator $\mathcal{R} (\lambda, \Gamma)=(\lambda-\Gamma)^{-1}$ of the infinitesimal generator $\Gamma$ of
the semigroup $(T_t)$ acting on $VMOA_p$ when  $1<p\leq 2$, and that this is  equivalent to
 the weak compactness of  $\mathcal{R}(\lambda, \Gamma)$  on $VMOA_p$.

In section 5, we study the compactness and weak compactness of the Volterra-type operators
\[ T_g=\int_0^z f(\zeta) g'(\zeta)\,d\zeta \]
on $BMOA_p$ and $VMOA_p$. We give a condition on $g$ that characterizes when $T_g:BMOA \to BMOA_1 $ is bounded or compact
thus extending \cite[Theorem 14 and  Corollary 16 (3)]{YuTo} to the case $p=1$. To prove this extension, we consider the Garsia norm (\cite{Girela}) for functions in $BMOA$.
 In addition we give equivalent conditions for $T_g$ to be compact or weakly compact when it acts between the spaces $BMOA_p$ and $VMOA_p$ for $1<p<2$, and relate this to a Carleson type condition on $g$.

Finally, we show a connection between the mapping properties of specific Volterra-type operators  with
the size of the maximal  subspace of strong continuity $[\phi_t, BMOA_p]$, extending  a related result on BMOA from  \cite[Corollary 2]{bcdmps}.

We will use the notation $C, C', C_1 ...$ to denote constants, the values of which may change from one step to the next. The notation
$X\simeq Y$   means that  the two quantities $X, Y$ are comparable.

\section{Composition Operators}

In this section we discuss boundedness of composition operators induced by univalent symbols on $BMOA_p$ and $VMOA_p$.

\begin{theorem}
Let $1\leq p < 2$, and $\psi : \D\to  \D$ analytic and univalent. Then:
\begin{enumerate}
\item the composition operator $C_\psi:BMOA_p\to BMOA_p$
is bounded, and
\begin{equation}\label{norm-comp}
\n{C_\psi}_{BMOA_p \to BMOA_p}\leq C \left(1+ \log\frac{1+\vert \psi(0)\vert}{1-\vert \psi(0)\vert}\right)\,,
\end{equation}
where $C$ is a constant depending only on $p$.
\item $C_{\psi} :VMOA_p\to VMOA_p$ is also bounded.
\end{enumerate}
\end{theorem}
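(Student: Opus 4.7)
The plan is to estimate $\n{f\circ\psi}_{BMOA_p}$ via the M\"obius-invariant characterization (\ref{norm2}), which expresses the norm as $|f(0)|^p$ plus a supremum of $D^p_{p-1}$-norms of hyperbolic translates. For the constant term appearing in (\ref{norm-comp}), I invoke Lemma 1(1) (the embedding $BMOA_p\hookrightarrow \mathcal{B}$) together with the standard pointwise Bloch estimate $|f(w)|\le |f(0)| + C\n{f}_{\mathcal{B}}\log\frac{1+|w|}{1-|w|}$, to obtain
\[
|f(\psi(0))| \le C\left(1+\log\frac{1+|\psi(0)|}{1-|\psi(0)|}\right)\n{f}_{BMOA_p},
\]
which accounts for the logarithmic factor in the stated bound.

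For the semi-norm part, for each $b\in\D$ I would factor $\psi\circ\varphi_b = \varphi_{\psi(b)}\circ\tau_b$, where $\tau_b:=\varphi_{\psi(b)}\circ\psi\circ\varphi_b$ is univalent on $\D$ and satisfies $\tau_b(0)=0$ (the composition of two univalent self-maps of $\D$ is univalent). Setting $h_b:=f\circ\varphi_{\psi(b)}-f(\psi(b))$, we have $h_b(0)=0$, $\n{h_b}_{D^p_{p-1}}\le \n{f}_{BMOA_p}$ by (\ref{norm2}), and $(f\circ\psi)\circ\varphi_b-f(\psi(b)) = h_b\circ\tau_b$. This reduces the task to showing that, for any univalent $\tau:\D\to\D$ with $\tau(0)=0$ and any $h\in D^p_{p-1}$ with $h(0)=0$, one has $\n{h\circ\tau}_{D^p_{p-1}}\le C\n{h}_{D^p_{p-1}}$ with $C$ absolute. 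For this reduced claim I would apply the Schwarz--Pick inequality $(1-|z|^2)|\tau'(z)|\le 1-|\tau(z)|^2$, and since $p-1\ge 0$ conclude $|\tau'(z)|^p(1-|z|^2)^{p-1}\le |\tau'(z)|(1-|\tau(z)|^2)^{p-1}$; the change of variable $w=\tau(z)$, justified by the univalence of $\tau$, then rewrites the resulting integral as
\[
\int_{\tau(\D)}|h'(w)|^p(1-|w|^2)^{p-1}|(\tau^{-1})'(w)|\,dm(w).
\]
The main obstacle is to bound this expression by $C\n{h}_{D^p_{p-1}}^p$ with $C$ absolute: the factor $|(\tau^{-1})'(w)|$ is not uniformly bounded on $\tau(\D)$, so the hard step is to control it via Koebe's distortion theorem applied to the univalent $\tau^{-1}$ on the simply connected domain $\tau(\D)$, combined with the Schwarz estimate $|\tau(z)|\le|z|$.

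For part (2), boundedness of $C_\psi$ on $VMOA_p$ follows by combining part (1) with the density of polynomials in $VMOA_p$: since $C_\psi$ is bounded on $BMOA_p$ and $VMOA_p$ is closed, it suffices to verify $Q\circ\psi\in VMOA_p$ for every polynomial $Q$. Using the vanishing Carleson characterization of $VMOA_p$, this amounts to showing that $\int_{S(I)}|(Q\circ\psi)'(z)|^p(1-|z|^2)^{p-1}\,dm(z) = o(|I|)$ as $|I|\to 0$, a fact I would derive from the boundedness of $Q'$ on $\D$ and Schwarz--Pick type control on $\psi$ over shrinking Carleson windows.
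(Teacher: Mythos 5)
Your reduction is the same as the paper's: you factor $\psi\circ\varphi_b=\varphi_{\psi(b)}\circ\tau_b$ with $\tau_b=\varphi_{\psi(b)}\circ\psi\circ\varphi_b$ univalent and fixing $0$, handle the constant term by the logarithmic growth estimate, and reduce everything to the boundedness of $C_{\tau}$ on $D^p_{p-1}$ for univalent $\tau$ with $\tau(0)=0$. The constant-term argument and your sketch of part (2) are fine (the latter ultimately rests, as in the paper, on $\int_{\D}|\psi'|^2\,dm<\infty$ for bounded univalent $\psi$). The problem is that the reduced claim is exactly the heart of the theorem, and the route you propose for it does not close.

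Concretely: after Schwarz--Pick and the change of variables you arrive at
\[
\int_{\tau(\D)}|h'(w)|^p(1-|w|^2)^{p-1}\,|(\tau^{-1})'(w)|\,dm(w),
\]
and you defer the control of $|(\tau^{-1})'(w)|$ to ``Koebe distortion combined with $|\tau(z)|\le|z|$.'' No pointwise bound of this kind can work: by the Koebe distortion theorem $|(\tau^{-1})'(w)|\simeq (1-|\tau^{-1}(w)|^2)/\mathrm{dist}(w,\partial\tau(\D))$, which is genuinely unbounded (for instance, if $\tau(\D)$ is a slit disc, $|(\tau^{-1})'(w)|\simeq \mathrm{dist}(w,\mathrm{slit})^{-1/2}\to\infty$ near interior points of the slit, where $1-|w|^2$ stays bounded below). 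Any successful bound must therefore be an integrated one that exploits the analyticity of $h'$, and you have supplied no mechanism for that; note also that for $p>1$ your Schwarz--Pick step is itself lossy, so even establishing your displayed integral bound would require more than the original statement gives you for free. The paper avoids the inverse Jacobian entirely by a different device: it splits the integrand by H\"older's inequality with exponents $2/p$ and $2/(2-p)$, so that one factor carries the full Jacobian $|\tau'(z)|^2$ (and after $1-|z|^2\le 1-|\tau(z)|^2$ a clean change of variables gives the bound with constant $1$), while the other factor carries no derivative of $\tau$ at all and is controlled by Littlewood subordination in the weighted Bergman space $A^p_{p-1}$, using $\tau(0)=0$. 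That H\"older splitting (or some equivalent subordination argument) is the missing idea; as written, your ``hard step'' is an acknowledged gap rather than a proof.
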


\begin{proof}
(1) For $f\in BMOA_p$ and $a\in \D$ we write
$$
I(f, p, a)=\int_{\D} \vert (f \circ \vf_a)'(z) \vert^p\,(1-|z|^2)^{p-1}dm(z),
$$
then we have
\[
\begin{split}
I(f\circ \psi,p,a)&=\int_{\D} \vert (f\circ \psi \circ \phi_a)'(z) \vert^p\,(1-\vert z \vert^2)^{p-1}dm(z)
\\
&=\int_{\D} \vert (f\circ\vf_{\psi(a)}\circ\vf_{\psi(a)} \circ \psi \circ \vf_a)'(z) \vert^p\,(1-\vert z \vert^2)^{p-1}dm(z)
\\
&= \int_{\D} |(f\circ \vf_{\psi(a)})'(\sigma_a (z))|^{p} |\sigma'_a (z)|^p\;(1-|z|^2)^{p-1}\,dm(z).
\end{split}
\]
where the function
\[
\sigma_a = \vf_{\psi(a)} \circ \psi \circ \vf_a,
\]
is a self-map of $\D$ and $\sigma_a(0)=0$.
Applying H\"{o}lder's inequality with exponents $2/p$ and $2/(2-p)$  we get
\[
\begin{split}
I(f\circ \psi,p,a)\leq&\left(\int_{\D} |(f\circ \vf_{\psi(a)})'(\sigma_a (z))|^p|\sigma'_a (z)|^2
(1-|z|^2)^{p-1}dm(z)\right)^{\frac{p}{2}}\cdot
\\
& \cdot \left(\int_{\D}|(f\circ \vf_{\psi(a)})'(\sigma_a (z))|^p(1-|z|^2)^{p-1}dm(z)\right)^{\frac{2-p}{2}}\\
&= I_1^{\frac{p}{2}}\cdot I_2^{\frac{2-p}{2}}.
\end{split}
\]
For the first integral $I_1$ we use the inequality $1-|z|^2\leq 1-|\sigma_a(z)|^2$, which is a consequence of
Scwarz's Lemma on $\sigma_a$, and make the change  of variables  $w=\sigma_a(z)$ to obtain
\begin{align*}
I_1 &\leq  \int_{\sigma_a(\D)} | (f\circ \vf_{\psi(a)})'(w)|^p\,(1-|w|^2)^{p-1}\,dm(w)\\
&\leq   \int_{\D} | (f\circ \vf_{\psi(a)})'(w)|^p\,(1-|w|^2)^{p-1}\,dm(w)\\
&= I(f, p, \psi(a)).
\end{align*}
The second integral $I_2$ can be viewed as a weighted area  integral of the composite function  $g\circ\sigma_a$, where
$g=(f\circ\vf_{\psi(a)})'$, against the weight $(1-|z|^2)^{p-1}$. Using known estimates for composition operators on weighted
Bergman spaces (see for example \cite[Lemma 1]{Sisk3}) we obtain
\begin{align*}
I_2&=\int_{\D}|g(\sigma_a (z))|^p(1-|z|^2)^{p-1}\,dm(z)\\
&\leq \left(\frac{\n{\sigma_a}_{\infty}+|\sigma_a(0)|}{\n{\sigma_a}_{\infty}-|\sigma_a(0)|}\right)^{p+1}
\int_{\D}|g(z)|^p(1-|z|^2)^{p-1}\,dm(z)\\
&=\int_{\D} |(f\circ\vf_{\psi(a)})'(z)|^p(1-|z|^2)^{p-1}\,dm(z)\quad (\text{since}\,\, \sigma_a(0)=0)\\
&= I(f, p, \psi(a))
\end{align*}

Putting these together we obtain
$$
I(f\circ \psi,\,p,\,a)\leq I(f,\,p,\,\psi(a))
$$
 for each $a\in \D$,  so
\[
\sup_{a\in\D}I(f\circ \psi,\,p,\,a) \leq \sup_{a\in\D} I(f,\,p,\,\psi(a)) \leq
\sup_{b \in \D}I(f, p, b) \leq  \n{f}^p_{BMOA_p}.
\]
Next using the growth estimate
$$
|f(z)|\leq C_1(1+\log\frac{1+|z|}{1-|z})\n{f}_{BMOA_p}
$$
for functions in $BMOA_p$ which is implicitly proved in \cite{Zhao} we obtain
\begin{align*}
\n{f\circ\psi}_{BMOA_p}^p&=|f(\psi(0))|^p+ \sup_{a\in\D}I(f\circ\psi, p, a)\\
&\leq C_2 \left(1+\log\frac{1+\vert \psi(0)\vert}{1-\vert \psi(0)\vert}\right)^p\norm{f}_{BMOA_p}^p,
\end{align*}
from which (\ref{norm-comp}) follows.

 (2) We are going to use the fact that  polynomials are dense in $VMOA_p$, and also the fact that
$\lim\limits_{r\to 1}\n{f-f_r}_{BMOA_p} =0$  for  each $f\in VMOA_p$, where  $f_r(z)=f(rz)$, $0<r<1$,
are the dilations of $f$. These two properties are in fact equivalent and each of them characterizes
membership of functions in $VMOA_p$, see  \cite[Theorem 3]{cdmv} or \cite[Proposition 2.3]{LP}.

For $f\in VMOA_p$, we need to show that $f\circ\psi\in VMOA_p$. Equivalently we will show that
$$
\lim_{r\to
1}\n{(f\circ\psi)_r-f\circ\psi}_{BMOA_p}=0.
$$
 Note that $(f\circ\psi)_r=f\circ\psi_r$, and  $\psi_r(0)=\psi(0)$ for each $r$. For a
polynomial $P$ we have (dropping the subscript of the norm)
\begin{align*}
\n{(f\circ\psi)_r-f\circ\psi}&=\n{f\circ\psi_r-f\circ\psi}\\
&\leq \n{f\circ\psi_r-P\circ\psi_r}+\n{P\circ\psi_r-P\circ\psi}+\n{P\circ\psi-f\circ\psi}\\
&=\n{(f-P)\circ\psi_r}+\n{P\circ\psi_r-P\circ\psi}+\n{(f-P)\circ\psi}\\
&\leq 2C\left(1+\log\frac{1+|\psi(0)|}{1-|\psi(0)|}\right)\n{f-P}+\n{P\circ\psi_r-P\circ\psi}.
\end{align*}
Now  since $\psi$ is univalent and bounded an easy argument through the integrability of $|\psi'(z)|^2$ over $\D$ gives that
$\psi\in VMOA_p$, and subsequently that $\psi(z)^n\in VMOA_p$ for each positive integer $n$.
Therefore $P\circ\psi\in VMOA_p$ for every polynomial $P$. Given $\e>0$ we can chose a
polynomial $P$ such that $2C(1+\log\frac{1+|\psi(0)|}{1-|\psi(0)|})\n{f-P}< \e/2$. Then we can find $r<1$ such that
$\n{P\circ\psi_r-P\circ\psi}<\e/2$, and the conclusion follows.
\end{proof}

\section{Semigroups of Composition Operators on $BMOA_p$}

First we show that all  semigroups $\{\phi_t\}$ induce  strongly continuous composition semigroups $(T_t)$ on $VMOA_p$.

\begin{theorem}
Let $\{\phi_t\}$ be a semigroup of functions  and $p \geq 1$. Then the induced composition semigroup $(T_t)$ is strongly
continuous   on $VMOA_p$.
\end{theorem}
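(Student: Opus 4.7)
The plan is to follow the classical three-step strategy for establishing strong continuity of a one-parameter operator semigroup on a Banach space: secure a uniform operator bound on a neighborhood of $t=0$, verify strong continuity pointwise on a dense subset, and then extend by approximation.

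First, since every element of a semigroup is univalent, Theorem 1(2) of Section 3 gives that $C_{\phi_t}:VMOA_p\to VMOA_p$ is bounded with
\[
\|C_{\phi_t}\|_{BMOA_p\to BMOA_p}\leq C\!\left(1+\log\frac{1+|\phi_t(0)|}{1-|\phi_t(0)|}\right).
\]
Because $t\mapsto\phi_t(0)$ is continuous with $\phi_0(0)=0$, one finds $\delta,M>0$ such that $\|C_{\phi_t}\|\leq M$ for every $t\in[0,\delta]$, providing the required uniform bound. Next, since $VMOA_p$ is the closure of the polynomials in $BMOA_p$, given $f\in VMOA_p$ and $\varepsilon>0$ one picks a polynomial $P$ with $\|f-P\|_{BMOA_p}<\varepsilon$, so that
\[
\|T_tf-f\|_{BMOA_p}\leq\|C_{\phi_t}\|\cdot\|f-P\|_{BMOA_p}+\|T_tP-P\|_{BMOA_p}+\|P-f\|_{BMOA_p}\leq(M+1)\varepsilon+\|T_tP-P\|_{BMOA_p}.
\]
The problem thus reduces to showing $\|T_tP-P\|_{BMOA_p}\to 0$ for every polynomial $P$.

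For the polynomial case, I would exploit the fact that $P$ is holomorphic in a neighbourhood of $\overline{\D}$, so $P'$ and $P''$ are uniformly bounded there. The decomposition
\[
(P\circ\phi_t)'(z)-P'(z)=P'(\phi_t(z))\bigl(\phi_t'(z)-1\bigr)+\bigl(P'(\phi_t(z))-P'(z)\bigr)
\]
yields the pointwise bound $|(P\circ\phi_t)'(z)-P'(z)|\leq\|P'\|_\infty|\phi_t'(z)-1|+\|P''\|_\infty|\phi_t(z)-z|$. Substituting into the seminorm~(\ref{norm1}) and using the semigroup identities $\phi_t(z)-z=\int_0^tG(\phi_s(z))\,ds$ and $\phi_t'(z)-1=\int_0^t(G'\phi_s')(\phi_s(z))\,ds$ together with Minkowski's inequality reduces the required estimate to bounding $BMOA_p$-type integrals of $G\circ\phi_s$ (multiplied by a bounded polynomial factor) uniformly for $s\in[0,\delta]$, which is in turn controlled by the uniform boundedness from the first step.

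The main obstacle is this last step: the pointwise convergence $\phi_t\to z$ is only locally uniform on $\D$, so driving the supremum over the Carleson test point $a\in\D$ to zero requires controlling the integrands near $\partial\D$. The estimate from Theorem 1(2), together with the boundedness of $P'$ and $P''$ on $\overline{\D}$, should ensure that the contribution of the boundary region is uniformly small as $t\to 0^+$; the more delicate point is verifying that the factor $GP'$ (or the polynomial factor times the generator) interacts well with the Carleson-square supremum that defines the $BMOA_p$ norm, which can be done via the Denjoy--Wolff representation (\ref{D-W}) of $G$.
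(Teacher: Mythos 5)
Your reduction to polynomials is exactly the paper's first step: uniform boundedness of $\|T_t\|$ near $t=0$ from Theorem 1, density of polynomials in $VMOA_p$, and the standard three-term estimate. The gap is in the polynomial case, and you essentially concede it yourself. Your decomposition bounds $|(P\circ\phi_t)'-P'|$ by $\|P'\|_\infty|\phi_t'(z)-1|+\|P''\|_\infty|\phi_t(z)-z|$, but the first term then requires showing $\sup_a\int_{\D}|\phi_t'(z)-1|^p(1-|z|^2)^{p-2}(1-|\vf_a(z)|^2)\,dm(z)\to 0$, which is precisely the assertion $\|T_t(\mathrm{id})-\mathrm{id}\|_{BMOA_p}\to 0$, i.e.\ the theorem for the polynomial $P(z)=z$; nothing has been gained. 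The proposed repair via $\phi_t'(z)-1=\int_0^t G'(\phi_s(z))\phi_s'(z)\,ds$ and Minkowski's inequality reduces matters to a bound on $\sup_{s\le\delta}\|G\circ\phi_s\|_{BMOA_p}$, which would require $G\in BMOA_p$. That fails in general: by the Berkson--Porta representation (\ref{D-W}), $G(z)=(\bar b z-1)(z-b)P(z)$ with $\mathrm{Re}\,P\ge 0$, and e.g.\ for $b=0$, $P(z)=\frac{1+z}{1-z}$ the generator $G(z)=-z\frac{1+z}{1-z}$ grows like $(1-|z|)^{-1}$, so it is not even a Bloch function, hence not in any $BMOA_p$. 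So the "controlled by the uniform boundedness from the first step" claim cannot be carried out as stated.

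The paper avoids all of this with a soft argument: polynomials lie in the classical Dirichlet space $\mathcal D$, a single application of H\"older's inequality shows $\mathcal D\subset VMOA_p$ with $\n{f}_{BMOA_p}\le C_p\n{f}_{\mathcal D}$, and since $P\circ\phi_t-P\in\mathcal D$ (as $\phi_t$ is univalent) the known strong continuity of composition semigroups on $\mathcal D$ \cite{Sisk4} gives $\n{T_t(P)-P}_{BMOA_p}\le C\n{T_t(P)-P}_{\mathcal D}\to 0$. If you want to salvage a direct computation, that embedding is the missing ingredient; otherwise you must carry out the boundary analysis near $\partial\D$ (splitting $|z|\le r$ and $|z|>r$ uniformly in the Carleson parameter $a$) rather than asserting it "should" work.
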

\begin{proof}
We need to show that if  $f\in VMOA_p$ then,
\[
\lim_{t \to 0^+}\, \n{T_t(f) - f}=0,
\]
(dropping again the subscript of the $BMOA_p$-norm) .
For a  polynomial $P$ we have
\begin{align*}
\n{T_t(f) - f}&\leq \n{T_t(f)-T_t(P)}+ \n{T_t(P)-P}+ \n{P-f}\\
&\leq \n{T_t} \n{f-P}+\n{T_t(P)-P}+ \n{P-f}\\
&\leq (1+\n{T_t})\n{P-f}+\n{T_t(P)-P}.
\end{align*}
  Since the polynomials are dense in $VMOA_p$  and $\sup\limits_{t<1}\n{T_t}<\infty$, it suffices to prove the
  claim for $f=P$, a polynomial. Now the set
of polynomials is contained in the classical Dirichlet space  $\mathcal{D}$ consisting of those  those $f$ for which
\[
\n{f}_{\mathcal{D}}^2=|f(0)|^2+ \int_{\D}| f'(z)|^2\,dm(z)<\infty.
\]
An application of H\"{o}lder's inequality shows that $\mathcal{D}\subset VMOA_p$ and there is a
constant $C=C_p$, such that
\[
\n{f}\leq C\, \n{f}_{\mathcal{D}},\quad \forall f \in \mathcal{D}.
\]
In particular for a polynomial $P$ we have    $P\circ \phi_t - P \in \mathcal{D}$ since $\phi_t$ is univalent, and
\[
\n{T_t(P) - P}\leq C\,\n{T_t(P) - P}_{\mathcal{D}}, \,\, \, t>0.
\]
But composition semigroups  are strongly continuous on $\mathcal{D}$ \cite{Sisk4}, so the last inequality implies
\[
\lim_{t \to 0^+}\, \n{T_t(P) - P}=0
\]
for each  polynomial $P$ and the proof is complete.
\end{proof}

\begin{theorem}
Suppose $\{\phi_t\}$ is a nontrivial semigroup and $1\leq p<\infty$. Then
$[\phi_t, BMOA_p]\subsetneq BMOA_p$.
\end{theorem}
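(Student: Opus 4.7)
The case $p\geq 2$ is immediate from the Anderson--Jovovic--Smith theorem quoted in the Introduction, because $H^{\infty}\subseteq BMOA_p\subseteq\mathcal B$. I concentrate on the range $1\leq p<2$, where $H^\infty\not\subset BMOA_p$ and AJS no longer applies directly.

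The plan is a two-step reduction to the classical $BMOA$ setting. Lemma 1(2) gives a continuous embedding $BMOA_p\hookrightarrow BMOA$, so any $f\in[\phi_t,BMOA_p]$ satisfies
\[
\n{f\circ\phi_t-f}_{BMOA}\leq C\,\n{f\circ\phi_t-f}_{BMOA_p}\xrightarrow[t\to 0^+]{}0,
\]
whence $[\phi_t,BMOA_p]\subseteq [\phi_t,BMOA]\cap BMOA_p$. Since AJS (applied to $X=BMOA$, which sits between $H^{\infty}$ and $\mathcal B$) asserts $[\phi_t,BMOA]\subsetneq BMOA$, it suffices to produce one $f\in BMOA_p\setminus[\phi_t,BMOA]$. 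A further application of Lemma 1(2) gives $BMOA_1\subseteq BMOA_p$ for every $p\geq 1$, so the same witness $f\in BMOA_1$ will cover all $p\in[1,2)$ simultaneously.

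For the witness I adapt the AJS strategy. The infinitesimal generator $G(z)=(\bar bz-1)(z-b)P(z)$ is not identically zero (the semigroup is nontrivial), so its zeros on $\partial\D$ form a discrete set; pick any $\zeta\in\partial\D$ that is not such a zero. Then $\phi_t(\zeta)\neq\zeta$ for every small $t>0$. Define
\[
f(z)=\log\frac{1}{1-\bar\zeta z},
\]
a standard member of $BMOA_1\setminus VMOA$ whose Carleson mass $d\mu_f(z)=|f'(z)|^2(1-|z|^2)\,dm(z)$ is concentrated at $\zeta$. Now evaluate the BMOA Carleson norm of $f\circ\phi_t-f$ on the box $S(I_t)$ anchored at $\zeta$ of side $|I_t|\asymp|\phi_t(\zeta)-\zeta|$. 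The singularity of $f\circ\phi_t$ is at $\phi_t^{-1}(\zeta)$, which lies outside $S(I_t)$ by the choice of $|I_t|$, so $(f\circ\phi_t)'$ stays bounded on $S(I_t)$, whereas $f'$ contributes a Carleson quotient $\asymp 1$. This forces
\[
\limsup_{t\to 0^+}\n{f\circ\phi_t-f}_{BMOA}>0,
\]
hence $f\notin[\phi_t,BMOA]$, and the theorem follows.

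The main obstacle is the last Carleson-box calculation: although $f\circ\phi_t-f\to 0$ pointwise and the Carleson mass of the difference on any \emph{fixed} box tends to $0$, the \emph{shrinking} boxes $S(I_t)$ still feel the full singular mass of $f$ at $\zeta$. Making this rigorous requires showing that $\phi_t(z)$ is pushed uniformly away from $\zeta$ when $z\in S(I_t)$, so that $(f\circ\phi_t)'$ is $O(1)$ there. The elliptic case $b\in\D$ is the most delicate, since boundary orbits can be tangential; here one chooses $\zeta$ so that $|\phi_t(\zeta)-\zeta|$ does not vanish to infinite order in $t$ (invoking the Koenigs conjugation if needed to transport the Denjoy--Wolff point to $0$), which makes the scale $|I_t|$ well defined and the box estimate uniform in small $t$.
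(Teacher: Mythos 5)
Your reduction step is sound and coincides with the paper's: since Lemma 1 gives a continuous embedding of $BMOA_p$ into $BMOA$ (and into $\mathcal{B}$), it suffices to exhibit a single $f\in BMOA_p$ with $\limsup_{t\to 0^+}\n{f\circ\phi_t-f}_{BMOA}>0$ (or $\liminf$ in the Bloch norm). The gap is in the witness. Your entire argument rests on the claim that $(f\circ\phi_t)'$ is $O(1)$ on the shrinking box $S(I_t)$, and this is not established; it is also not true in the generality you need. One has $(f\circ\phi_t)'(z)=\bar\zeta\,\phi_t'(z)/(1-\bar\zeta\phi_t(z))$, so even if $\phi_t(S(I_t))$ avoids $\zeta$ by a distance comparable to $|I_t|$, the best you get is $|(f\circ\phi_t)'|\lesssim |\phi_t'(z)|/|I_t|$, and the relevant quantity is anyway the Carleson quotient of $|(f\circ\phi_t)'|^2(1-|z|^2)\,dm$ over $S(I_t)$, which can remain comparable to $1$ if the image of the box still approaches $\zeta$ at the right scale. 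Nothing in the hypotheses prevents that: you would need uniform (in small $t$) control of how fast boundary orbits separate from $\zeta$, and you explicitly defer exactly this point (the choice of $\zeta$, the meaning of $\phi_t(\zeta)$ and of the ``boundary zeros'' of $G$, the rate at which $|\phi_t(\zeta)-\zeta|$ vanishes, and the elliptic case) to unproven assertions. So the lower bound $\limsup_t\n{f\circ\phi_t-f}_{BMOA}>0$ for a single logarithmic singularity is not proved, and this is the heart of the theorem, not a technicality.

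The paper takes a different and much shorter route to the witness: it reuses the test functions of Anderson, Jovovic and Smith, namely infinite interpolating Blaschke products $B$ with zeros on a radius, for which AJS already proved $\liminf_{t\to 0}\n{B\circ\phi_t-B}_{\mathcal B}\ge\delta>0$ for every nontrivial semigroup. The only new input needed is that such $B$ lie in $BMOA_p$ for all $p$, which follows from $B\in\bigcap_{0<s<1}Q_s$ (Girela--Pel\'aez--Vukoti\'c) together with $\bigcup_{0<s<1}Q_s\subset BMOA_p$ (Aulaskari--Tovar); the Bloch lower bound then contradicts strong continuity in $BMOA_p$ via Lemma 1(1). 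If you want to salvage your single-log-function approach, you would have to carry out the orbit analysis you sketch, which is essentially redoing the hard part of AJS in a harder setting; importing their test functions is the efficient move.
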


\begin{proof} The result is obtained by observing that the proof of  \cite[Theorem 3.1]{AJS}, whose conclusion is used to
prove  the analogous result \cite[Theorem 1.1]{AJS}, in fact applies here.
More precisely,  in the proof of   \cite[Theorem 3.1]{AJS}  the authors use appropriate test functions $f\in H^\infty$  which
satisfy
$$
\liminf_{t\to 0}\n{f\circ\phi_t - f}_{\mathcal{B}}\geq \delta>0,
$$
for each nontrivial $\{\phi_t\}$. Those test functions $f$ are infinite interpolating Blaschke products whose
zeros lie on a radius  $\{r\g_0:  0<r<1\}$ for some   $\g_0\in\partial\D$.
 On the other hand  from \cite[Theorem 3.12]{GPV} (see also \cite{DM}), we know that for such Blaschke products $B(z)$,
 $$
 B(z) \in \bigcap_{s\in{(0,1)}}Q_s.
 $$
 Since $\bigcup\limits_{0<s<1}Q_s\subset BMOA_p$ for all $p>0$,  these test functions can be used
 in our context, and the proof can be completed by following  the lines of the proof in \cite{AJS}.
\end{proof}

\begin{remark} From the above it follows that  the hypothesis $H^\infty \subset X\subset \mathcal{B}$
 in the theorem of Anderson, Jovovic and Smith \cite[Theorem 1.1]{AJS} can be replaced by
 \[
 \bigcap_{s\in{(0,1)}} Q_s\subset X\subset \mathcal{B} 
 \]
or by some similar condition which will imply that interpolating Blaschke products belong to $X$.
\end{remark}

\begin{theorem}\label{theNec}
Let $1\leq p<2\,$ and let $\{\phi_t\}$ be a semigroup with infinitesimal generator $G$. If
\begin{equation}\label{pLog}
\lim_{| I|\to 0} \frac{\left(\log\frac{2}{\vert I \vert}\right)^p}{\vert I \vert}
\int_{S(I)} \frac{(1-\vert z\vert^2)^{p-1}}{\vert G(z) \vert^p}\,dm(z)=0,
\end{equation}
then $VMOA_p = [\phi_t, BMOA_p]$.
\end{theorem}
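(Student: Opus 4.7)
The plan is to combine the general identity (\ref{ssc}) with a Carleson-measure argument. Because $BMOA_p$ contains the constants and Theorem 1 of Section 3 provides $\sup_{0<t<1}\n{T_t}_{BMOA_p\to BMOA_p}<\infty$, formula (\ref{ssc}) gives
\[
[\phi_t, BMOA_p] = \overline{\{f\in BMOA_p\,:\, Gf'\in BMOA_p\}}.
\]
By Theorem 2 we already have $VMOA_p\subseteq[\phi_t,BMOA_p]$, and since $VMOA_p$ is closed in $BMOA_p$, proving the reverse inclusion reduces to the pointwise implication: \emph{if} $f\in BMOA_p$ \emph{and} $H:=Gf'\in BMOA_p$, \emph{then} $f\in VMOA_p$.

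Using the Carleson-measure description of $VMOA_p$ from Section 2, the task is to show
\[
\lim_{|I|\to 0}\frac{1}{|I|}\int_{S(I)}|f'(z)|^p(1-|z|^2)^{p-1}\,dm(z)=0.
\]
Since $1\leq p<2$, Lemma 1 gives $H\in BMOA_p\subset\mathcal{B}$, whence the Bloch-type growth estimate $|H(z)|\leq C\log\tfrac{2}{1-|z|^2}$. Substituting $|f'|^p=|H|^p/|G|^p$ and setting $d\nu(z):=(1-|z|^2)^{p-1}/|G(z)|^p\,dm(z)$, the problem becomes
\[
\lim_{|I|\to 0}\frac{1}{|I|}\int_{S(I)}\Bigl(\log\tfrac{2}{1-|z|^2}\Bigr)^p d\nu(z)=0,
\]
where $\nu$ is exactly the measure to which hypothesis (\ref{pLog}) applies.

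I would then pick a Whitney point $z_I\in S(I)$ with $1-|z_I|\asymp|I|$ and split $|H(z)|^p\leq 2^{p-1}(|H(z_I)|^p+|H(z)-H(z_I)|^p)$. The mean part contributes at most $C(\log\tfrac{2}{|I|})^p\nu(S(I))$, which is $o(|I|)$ immediately by (\ref{pLog}). For the oscillation part the goal is a BMOA--Carleson estimate
\[
\int_{S(I)}|H(z)-H(z_I)|^p\,d\nu(z)\leq C\,\n{H}_{BMOA_p}^p\,\nu(S(I));
\]
combined with $\nu(S(I))=o(|I|)$, which follows from (\ref{pLog}) since $(\log\tfrac{2}{|I|})^p\geq 1$ for small $|I|$, this completes the argument. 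The main obstacle is proving this oscillation estimate: any purely pointwise bound via the Bloch estimate followed by a dyadic decomposition of $S(I)$ into annuli $R_k=\{z:1-|z|\asymp 2^{-k}|I|\}$ yields a divergent series $\sum_k k^p\,\nu(R_k)$, so one must exploit the genuine Carleson-measure structure of $|H'|^p(1-|z|^2)^{p-1}\,dm$ (which encodes $H\in BMOA_p$) through a tent-space, Garsia-type, or Pommerenke-type oscillation argument.
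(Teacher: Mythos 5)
Your reduction via \eqref{ssc}, the choice of the measure $d\nu=(1-|z|^2)^{p-1}|G(z)|^{-p}dm$, the split at the Whitney point $z_I$, and the treatment of the mean term all coincide with the paper's proof. The gap is exactly where you say the obstacle is: the oscillation term is the whole content of the theorem, and the target estimate you set up for it,
\[
\int_{S(I)}|H(z)-H(z_I)|^p\,d\nu(z)\leq C\,\n{H}_{BMOA_p}^p\,\nu(S(I)),
\]
is not a theorem you can appeal to and is almost certainly not the right statement. Normalized by $\nu(S(I))$ rather than by $|I|$, it fails for general Carleson measures (a Carleson measure may concentrate its mass on a thin top layer of $S(I)$ at scale $\e|I|$, where $|H-H(z_I)|\simeq\n{H}_{\mathcal{B}}\log\frac{1}{\e}$ is unbounded in $\e$), and under \eqref{pLog} it would amount to a bound $O\bigl(|I|(\log\frac{2}{|I|})^{-p}\bigr)$ for the oscillation integral --- strictly stronger than what the paper actually proves, which is only $O\bigl(|I|(\log\frac{2}{|I|})^{-(2-p)p/2}\bigr)$, and stronger than needed. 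So the proposal stops short of the key step and aims at the wrong intermediate inequality.

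The mechanism that closes the argument in the paper is an interpolation that uses $p<2$ in an essential way: apply H\"older with exponents $2/p$ and $2/(2-p)$ to
\[
A_I\leq C|I|^{p-1}\int_{S(I)}\left|\frac{f(z)-f(z_I)}{1-\bar z_I z}\right|^p d\mu(z),
\]
producing a factor $\mu(S(I))^{(2-p)/2}$, which is small by \eqref{pLog}, times $\bigl(\int_{\D}|\frac{f-f(z_I)}{1-\bar z_I z}|^2 d\mu\bigr)^{p/2}$ extended to all of $\D$. Since \eqref{pLog} also forces $\mu$ to be a Carleson measure, the embedding \eqref{CarH2} bounds this last integral by $C_\mu$ times the boundary integral $\int_{\partial\D}|\frac{f(\zeta)-f(z_I)}{1-\bar z_I\zeta}|^2|d\zeta|$, which is $(1-|z_I|)^{-1}\simeq|I|^{-1}$ times the square of the Garsia norm of $f=Gh'\in BMOA_p\subset BMOA$, hence comparable to $|I|^{-1}\n{f}_{BMOA}^2$ by \eqref{Garcia}. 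The powers of $|I|$ cancel exactly and a factor $(\log\frac{2}{|I|})^{-(2-p)p/2}\to 0$ survives. Your instinct to reach for a ``Garsia-type'' argument is the right one, but without the H\"older step that trades the exponent $p$ for $2$ (and extracts the decaying power of $\mu(S(I))$), neither the $L^2$ Carleson embedding nor the Garsia norm becomes available, and the $|I|$-normalized Carleson oscillation bound alone only yields $A_I=O(1)$.
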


\begin{proof} Using the description  (\ref{ssc}) of the maximal subspace of strong continuity,
it suffices to show that if $h\in BMOA_p$ is such that $Gh'\in BMOA_p$ then $h\in VMOA_p$. Set
$f=Gh'$ for such an $h$ and select  $r$  sufficiently close to $1$ so that $ G$ has no zeros in the ring
$D_r=\left\{z\in\D: r<\abs{z}<1\right\}$. Then    $1/G$ is analytic on $D_r$.
 Let   $I\subset \partial \D$ be an arc  sufficiently small so that $S(I)\subset D_r$, write
$z_I=(1-|I|)\xi$ where $\xi \in \partial\D$ is the center of $I$, and set
$$
d\mu(z)=\frac{(1-|z|^2)^{p-1}}{|G(z)|^p}\,dm(z).
$$
Then $h'=f/G$ and we have
\begin{align*}
\frac{1}{|I|}&\int_{S(I)}|h'(z)|^p(1-|z|^2)^{p-1}\,dm(z) =\frac{1}{|I|}\int_{S(I)}|f(z)|^p\,d\mu(z)\\
&\leq 2^p\frac{1}{|I|}\int_{S(I)}|f(z)-f(z_I)|^p\,d\mu(z)+ 2^p\frac{|f(z_I)|^p}{|I|}\int_{S(I)}\,d\mu(z)\\
&\leq 2^p\frac{1}{|I|}\int_{S(I)}\left|\frac{f(z)-f(z_I)}{1-\bar{z}_I z}\right|^p
|1-\bar{z}_I z|^p \,d\mu(z)+2^p C^p \frac{(\log\frac{2}{|I|})^p}{|I|}\mu(S(I))\\
&= 2^pA_I + 2^pCB_I
\end{align*}
where we have used the inequality $(x+y)^p\leq 2^p(x^p+y^p)$ and the growth estimate
$|g(z)|\leq C\log\frac{2}{1-|z|}$ for functions
$g\in BMOA_p$. By hypothesis (\ref{pLog}),
$$
B_I= \frac{(\log\frac{2}{|I|})^p}{|I|}\mu(S(I)) \to 0, \quad \text{as}\, |I|\to 0.
$$
We will show that the same is true for $A_I$. Using the  estimate
$|1-\bar{z}_I z|\simeq |I|$  for $ z\in S(I)$, and applying H\"{o}lder's inequality we have
\begin{align*}
A_I&\leq C|I|^{p-1}\int_{S(I)}\left|\frac{f(z)-f(z_I)}{1-\bar{z}_I z}\right|^p\, d\mu(z)\\
 &\leq C|I|^{p-1}\left(\int_{S(I)}\left|\frac{f(z)-f(z_I)}{1-\bar{z}_I z}\right|^2 \,d\mu(z)\right)^{\frac{p}{2}}
\left(\int_{S(I)}d\mu(z)\right)^{\frac{2-p}{2}}\\
&= C|I|^{p-1}\mu(S(I))^{\frac{2-p}{2}}
\left(\int_{\D}\left|\frac{f(z)-f(z_I)}{1-\bar{z}_I z}\right|^2 \,d\mu(z)\right)^{\frac{p}{2}}.
\end{align*}

The hypothesis (\ref{pLog}) implies that for  $|I|$  sufficiently small
$\mu(S(I))  \leq  |I|\left(\log\frac{e}{| I |}\right)^{-p}$, and that  $\mu$ is a Carleson measure,
 i.e. (\ref{CarH2}) holds.  Thus we have

\begin{align*}
A_I &\leq C|I|^{p-1}\left( |I|(\log\frac{2}{| I |})^{-p} \right)^{\frac{2-p}{2}}
\left(\int_{\D}\left|\frac{f(z)-f(z_I)}{1-\bar{z}_I z}\right|^2 \,d\mu(z)\right)^{\frac{p}{2}}\\
&\leq C C_{\mu}^{p/2}\frac{|I|^{\frac{p}{2}}}{(\log\frac{2}{|I|})^{\frac{2-p}{2}p}}
\left( \int_{\partial\D}\left|\frac{f(\zeta)-f(z_I)}{1-\bar{z}_I \zeta}\right|^2
 |d\zeta|\right)^{\frac{p}{2}}\\
&\simeq \frac{1}{(\log\frac{2}{\abs{I}})^{\frac{2-p}{2}p}}
 \left((1-\abs{z_I}) \int_{\partial\D}\left\vert\frac{f(\zeta)-f(z_I)}{1-\bar{z}_I \zeta}\right\vert^2
 \abs{d\zeta}\right)^{\frac{p}{2}}\\
 &\leq \frac{1}{(\log\frac{2}{\abs{I}})^{\frac{2-p}{2}p}}\left(\sup_{a\in \D}(1-|a|^2)
 \int_{\partial\D}\left|\frac{f(\zeta)-f(a))}{1-\bar{a} \zeta}\right|^2
 \abs{d\zeta}\right)^{\frac{p}{2}}.
\end{align*}
Now the quantity inside the parenthesis is the square of the Garsia norm   of $f\in BMOA_p\subset BMOA$.
Since the Garsia norm is comparable to the BMOA norm (\cite{Girela}), i.e.
\begin{equation}\label{Garcia}
\n{f}^2_{BMOA}\simeq \sup_{a \in \D} \int_{\partial\D}
|f(\zeta) - f(a)|^2\frac{1-|a|^2}{|1-\bar{a}\zeta|^2}|d\zeta|,
\end{equation}
we get
$$
A_I\leq\frac{1}{(\log\frac{2}{\abs{I}})^{\frac{2-p}{2}\,p}} \n{f}^p_{BMOA}\to 0 \quad \text{as}\, |I|\to 0.
$$
It follows that
\[
\lim_{|I|\to 0}\frac{1}{|I|}\int_{S(I)}|h'(z)|^p(1-|z|^2)^{p-1}\,dm(z) =0
\]
which means that $h$ is in $VMOA_p$ and the proof is complete.
\end{proof}

The next  Corollary says that there are plenty of  semigroups for which $[\phi_t, BMOA_p]=VMOA_p $.

\begin{corollary}
Let $\,1\leq p <2\,$ and $\,a\in(0,1)\,$. If
\begin{equation}\label{cond2}
\frac{(1-\abs{z})^a}{G(z)}\,=\,O(1),\;\;\abs{z}\to 1\, ,
\end{equation}
then $ [\phi_t, BMOA_p]=VMOA_p$.
\end{corollary}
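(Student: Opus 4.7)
The plan is to verify the hypothesis \eqref{pLog} of Theorem \ref{theNec} directly from the growth condition \eqref{cond2}, after which the conclusion $[\phi_t, BMOA_p] = VMOA_p$ is immediate. The assumption gives a constant $M>0$ and a radius $r_0<1$ such that $|G(z)| \geq M^{-1}(1-|z|)^a$ for $r_0 < |z| < 1$. Hence, for arcs $I \subset \partial\D$ with $|I|$ sufficiently small so that $S(I) \subset \{r_0 < |z| < 1\}$,
\begin{equation*}
\frac{(1-|z|^2)^{p-1}}{|G(z)|^p} \leq C \frac{(1-|z|)^{p-1}}{(1-|z|)^{ap}} = C(1-|z|)^{p(1-a)-1}
\end{equation*}
for $z \in S(I)$, where $C = (2M)^p$.

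The next step is the routine estimate of the Carleson-box integral. Using Fubini with $dm(z) = rdr\,d\theta/\pi$ and the fact that on $S(I)$ one has $1-|I| < r < 1$, I would compute
\begin{equation*}
\int_{S(I)} (1-|z|)^{p(1-a)-1}\,dm(z) \leq \frac{|I|}{\pi}\int_0^{|I|} s^{p(1-a)-1}\,ds = \frac{|I|^{p(1-a)+1}}{\pi\, p(1-a)},
\end{equation*}
where the exponent is $>-1$ precisely because $a<1$ ensures $p(1-a)>0$.

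Plugging this bound into the left-hand side of \eqref{pLog} yields
\begin{equation*}
\frac{\left(\log\frac{2}{|I|}\right)^p}{|I|} \int_{S(I)} \frac{(1-|z|^2)^{p-1}}{|G(z)|^p}\,dm(z) \leq C' \left(\log\tfrac{2}{|I|}\right)^p |I|^{p(1-a)},
\end{equation*}
and the right-hand side tends to $0$ as $|I|\to 0$ since the polynomial decay $|I|^{p(1-a)}$ with positive exponent dominates any power of $\log(2/|I|)$. This verifies \eqref{pLog}, so Theorem \ref{theNec} delivers the equality $[\phi_t, BMOA_p] = VMOA_p$.

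There is no real obstacle here; the proof is a straightforward application of Theorem \ref{theNec}. The only points to be careful about are verifying $p(1-a)-1 > -1$ so that the radial integral converges (which uses $a<1$ strictly) and ensuring the lower bound on $|G|$ is valid only for $|z|$ close to $1$, which is harmless since \eqref{pLog} is a limit as $|I|\to 0$. It may be worth remarking that the result covers many standard examples, for instance semigroups whose generator vanishes only at a boundary Denjoy--Wolff point with contact of positive order less than one.
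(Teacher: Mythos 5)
Your proof is correct and follows essentially the same route as the paper: bound $(1-|z|^2)^{p-1}/|G(z)|^p$ by $C(1-|z|)^{p(1-a)-1}$ near the boundary, integrate over the Carleson box to get a bound of order $|I|^{p(1-a)+1}/(p(1-a))$, and conclude that the logarithmic factor in \eqref{pLog} is killed by the positive power of $|I|$. The extra care you take about the lower bound on $|G|$ holding only near $\partial\D$ is a harmless refinement of the same argument.
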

\begin{proof}
The hypothesis  (\ref{cond2}) implies
$$
\frac{(1-|z|^2)^{p-1}}{|G(z)|^p}\leq C(1-|z|)^{p(1-a)-1}, \quad |z|\to 1,
$$
therefore
\begin{align*}
\int_{S(I)} \frac{(1-| z|^2)^{p-1}}{| G(z)|^p}\,dm(z)&\leq C\int_{S(I)}(1-|z|)^{p(1-a)-1}\,dm(z)\\
&\leq C|I|\int_{1-|I|}^1(1-r)^{p(1-a)-1}\,dr\\
&=C\frac{|I|^{p(1-a)+1}}{p(1-a)}.
\end{align*}
Now since
$$
 \left(\log\frac{2}{\vert I \vert}\right)^p\,\frac{\abs{I}^{p(1-a)}}{p(1-a)}\longrightarrow  0 \quad \text{as}\,|I|\to 0,
$$
the condition  (\ref{pLog})   is satisfied     and the conclusion follows.
\end{proof}

Next we prove a  necessary condition for $ [\phi_t, BMOA_p]=VMOA_p$, for semigroups with Denjoy-Wolff point in the open disc.

\begin{theorem}
Let $\{\phi_t\}_{t\geq 0}$ be a semigroup with infinitesimal generator $G$ and Denjoy-Wolff
point $b\in\D$. If $VMOA_p=[\phi_t, BMOA_p]$, then
\begin{equation}\label{cond3}
\lim_{\abs{I}\to 0} \frac{1}{\abs{I}}\int_{S(I)} \frac{(1-\vert z\vert^2)^{p-1}}{\vert G(z) \vert^p}\,dm(z)=0.
\end{equation}
\end{theorem}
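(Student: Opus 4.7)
The plan is to exploit the characterization (\ref{ssc}) to reduce the problem to a single test function. Since the norm bound (\ref{norm-comp}) together with $\phi_t(0)\to 0$ as $t\to 0^+$ gives $\sup_{0<t<1}\n{T_t}_{BMOA_p\to BMOA_p}<\infty$, (\ref{ssc}) applies with $X=BMOA_p$. Combined with the hypothesis $[\phi_t,BMOA_p]=VMOA_p$, this yields
\[
\{f\in BMOA_p:\; Gf'\in BMOA_p\}\subseteq VMOA_p.
\]
So it suffices to exhibit one $h\in BMOA_p$ with $Gh'\in BMOA_p$ whose $VMOA_p$-membership forces (\ref{cond3}).

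The natural candidate is
\[
h(z)=\int_0^z \frac{\zeta-b}{G(\zeta)}\,d\zeta.
\]
By the Berkson--Porta formula (\ref{D-W}), the integrand equals $1/[(\bar bz-1)P(z)]$, which is analytic on $\D$: since $|b|<1$ we have $\bar bz\neq 1$ on $\D$, and since $P$ is Herglotz with $Re\,P\geq 0$ and nontrivial (the trivial semigroup case is excluded by hypothesis), the minimum principle for the positive harmonic function $Re\,P$ forces $P$ to be zero-free on $\D$. So $h\in H(\D)$, and by construction $Gh'(z)=z-b$ is a polynomial, hence lies in $BMOA_p$ trivially.

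The main obstacle is showing $h\in BMOA_p$. The plan here is to observe that $1/P$ is itself Herglotz ($Re(1/P)=Re\,P/|P|^2\geq 0$) and to invoke the Herglotz representation
\[
\frac{1}{P(z)}=ic+\int_{\partial\D}\frac{\zeta+z}{\zeta-z}\,d\mu(\zeta),\qquad \mu\geq 0\text{ finite on }\partial\D.
\]
Integrating termwise writes the primitive $F_0(z)=\int_0^z 1/P(\zeta)\,d\zeta$ as a linear polynomial plus a $\mu$-average of the rotates $\log(1-z/\zeta)=\log(1-\bar\zeta z)$. Since $BMOA_p$ is rotation invariant, each such rotate has $BMOA_p$-norm equal to $\n{\log(1-z)}_{BMOA_p}$, so Minkowski's integral inequality gives $F_0\in BMOA_p$ with norm $\lesssim \mu(\partial\D)\n{\log(1-z)}_{BMOA_p}$. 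Writing $h'(z)=u(z)F_0'(z)$ with $u(z)=1/(\bar bz-1)\in H^\infty$ satisfying $\n{u}_\infty\leq (1-|b|)^{-1}$, the pointwise bound
\[
|h'(z)|^p(1-|z|^2)^{p-1}\leq (1-|b|)^{-p}|F_0'(z)|^p(1-|z|^2)^{p-1}
\]
transfers the Carleson-measure property from $F_0$ to $h$, so $h\in BMOA_p$.

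Combining the previous steps, $h\in VMOA_p$, meaning $\lim_{|I|\to 0}\frac{1}{|I|}\int_{S(I)}|h'|^p(1-|z|^2)^{p-1}\,dm=0$. For every arc $I$ with $|I|<(1-|b|)/2$, each $z\in S(I)$ satisfies $|z-b|\geq |z|-|b|\geq(1-|b|)/2$, hence $|h'(z)|^p=|z-b|^p/|G(z)|^p\geq\bigl((1-|b|)/2\bigr)^p/|G(z)|^p$ on $S(I)$. Dividing by this positive constant yields (\ref{cond3}). The delicate point is the claim that primitives of Herglotz functions lie in $BMOA_p$ for $1\leq p<2$; if the Minkowski step raises measurability concerns, the same conclusion can be obtained directly by plugging the Herglotz representation into the Carleson norm of $|1/P|^p(1-|z|^2)^{p-1}\,dm$ and applying Fubini and Jensen's inequality, reducing the matter to the uniform estimate $\sup_{\zeta\in\partial\D}\sup_I \frac{1}{|I|}\int_{S(I)}\bigl|(\zeta+z)/(\zeta-z)\bigr|^p(1-|z|^2)^{p-1}\,dm(z)<\infty$, which is valid precisely in the range $1\leq p<2$.
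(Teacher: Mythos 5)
Your proposal is correct, and it follows the paper's overall strategy — reduce via (\ref{ssc}) to showing that a single test function $h$ with $Gh'$ a polynomial must lie in $VMOA_p$, where $h'=(z-b)/G(z)=1/[(\bar bz-1)P(z)]$ — but it diverges at the one genuinely nontrivial step, namely proving $h\in BMOA_p$. The paper normalizes $b=0$, notes that $g=-\int_0^z 1/P$ is \emph{univalent} because $Re(1/P)\geq 0$ (citing \cite[Proposition 1.10]{Pomm}), deduces $g\in\mathcal{B}$ from the growth estimate $|1/P(z)|=O((1-|z|)^{-1})$, and then invokes the theorem of P\'erez-Gonz\'alez and R\"atty\"a that univalent Bloch functions coincide with univalent $BMOA_p$ functions. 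You instead use the Herglotz representation of $1/P$ to write the primitive as a $\mu$-average of rotates of $\log(1-z)$ plus a linear term, and conclude by rotation invariance of the $BMOA_p$ norm together with Minkowski's integral inequality (or Fubini--Jensen), finally transferring the Carleson condition through the bounded multiplier $1/(\bar bz-1)$. Your route is more self-contained — it avoids both the univalence criterion and the univalent-Bloch-equals-univalent-$BMOA_p$ theorem, and it yields an explicit norm bound $\n{h}_{BMOA_p}\lesssim(1-|b|)^{-1}\,Re(1/P(0))\,\n{\log(1-z)}_{BMOA_p}$ plus lower-order terms — at the cost of the kernel estimate $\sup_{\zeta,I}|I|^{-1}\int_{S(I)}|\zeta-z|^{-p}(1-|z|^2)^{p-1}\,dm(z)<\infty$, which you correctly assert (though it holds beyond the range $1\leq p<2$, so ``precisely'' is an overstatement; this is harmless since it is the sufficiency that you use). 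The remaining steps — zero-freeness of $P$ via the minimum principle, $Gh'=z-b\in BMOA_p$, and the lower bound $|z-b|\geq(1-|b|)/2$ on $S(I)$ for small $|I|$ to pass from $h\in VMOA_p$ to (\ref{cond3}) — match the paper's argument in substance.
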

\begin{proof}
We assume, without loss of generality, that $b=0$. Then, from (\ref{D-W})
\[
G(z)=-zP(z),\quad Re(P)\geq 0.
\]
Next, we consider the function
\[
g(z)=\int_0^z \frac{u}{G(u)}\,du=-\int_0^z \frac{1}{P(u)}\,du,
\]
Since $Re(\frac{1}{P})\geq 0$  the function $g$ is univalent \cite[Proposition 1.10]{Pomm}, and from the growth estimate
 $|1/P(z)|=O(\frac{1}{1-|z|}),\, z\in \D$, for functions of non-negative real part it follows that $(1-|z|)|g'(z)|$ is bounded on $\D$, so
 $g\in \mathcal{B}$.  But univalent functions of the Bloch space are the same as those in $BMOA_p$ \cite[p. 134]{PerRat2},
 hence $g\in BMOA_p$.

In addition $G(z)g'(z)=z\in BMOA_p$.
This means that $g\in[\phi_t, BMOA_p]\,$ and by hypothesis $g\in VMOA_p$. That is
\[
\lim_{\abs{I}\to 0} \,\frac{1}{\abs{I}}\,\int_{S(I)} \frac{\abs{z}^p}{\vert G(z) \vert^p}(1-\vert z\vert^2)^{p-1}\,dm(z)=0
\]
and (\ref{cond3}) follows since $|z|\simeq 1$ for $z\in S(I)$ when $I$ is small.
\end{proof}

Note that there is a gap between the conditions  (\ref{pLog}) and (\ref{cond3}), and finding a characterization
of the equality $VMOA_p = [\phi_t, BMOA_p]$ in terms of the generator $G$  seems to be difficult.
Such a characterization however can be given,  following \cite{bcdmps}, in terms of the  resolvent operator
$$
\mathcal R (\lambda, \Gamma)=(\lambda-\Gamma)^{-1}
$$
of the infinitesimal generator $\Gamma(f)=Gf'$ of
the semigroup $(T_t)$ acting on $VMOA_p$ when  $1<p<2$. For those values of $p$ it follows from the work of
K.M. Perfekt \cite{Perf1} that we have the duality
\begin{equation}\label{duality}
VMOA_p^{**}=BMOA_p.
\end{equation}
Indeed Perfekt shows that  if $X$ is a reflexive and separable Banach space, then its M\"{o}bius invariant subspace
\[
M(X)=\{f\in X: \;\sup_{a\in \D}\norm{f\circ \phi_a - f}_X<\infty\}
\]
and the  little-o version
\[
M_0(X)=\{f\in X: \;\lim_{\abs{a}\to1 }\norm{f\circ \phi_a - f}_X=0\}
\]
have the duality property
\[
M_0(X)^{**}=M(X).
\]
More details on the results from \cite{Perf1} will be given in Section 5.  If  $X=D^p_{p-1}$,  $1<p<2$,  then  $M(X)=BMOA_p$
and $M_0(X)=VMOA_p$ and $D^p_{p-1}$ is reflexive, as it can be seen by adjusting Luecking's result on the dual spaces of
weighted Bergman spaces \cite[Theorm 2.1]{Lueck}. Thus we get the duality (\ref{duality}).

A characterization of the semigroups for which $[\phi_t, BMOA]=VMOA$
was given in \cite{bcdmps} in terms of properties of the resolvent operator. Following the same steps  the result
is seen to hold for $BMOA_p$ for all $1<p<2$ as the  following proposition describes.

 \begin{proposition}
Let $\{\phi_t\}$ be a semigroup and  let $\Gamma$ be the infinitesimal generator of the composition semigroup $(T_t)$ acting on
$VMOA_p$, $1<p<2$. Let
$\rho(\Gamma)=\{\lambda\in \C: \mathcal R (\lambda, \Gamma)=(\lambda-\Gamma)^{-1}:VMOA_p\to VMOA_p\,\, \, \text{is bounded}\}$,
the resolvent set. Then for  $\lambda\in \rho(\Gamma)$ the following statements are equivalent:
\begin{enumerate}
\item $[\phi_t, BMOA_p]=VMOA_p$\,.
 \item $\mathcal R(\lambda, \Gamma)^{**}(BMOA_p) \subset VMOA_p$.
 \item $\mathcal R(\lambda, \Gamma)$ is weakly compact on $VMOA_p$.
 \end{enumerate}
 \end{proposition}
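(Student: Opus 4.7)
The strategy is to exploit the duality $VMOA_p^{**}=BMOA_p$ from \eqref{duality} in order to identify the bi-adjoint of the strongly continuous semigroup $(T_t)$ on $VMOA_p$ with the (non-strongly continuous) composition semigroup on $BMOA_p$, and then to express the bi-adjoint resolvent $\mathcal R(\lambda,\Gamma)^{**}$ in terms of the operator $\tilde\Gamma f=Gf'$ on the natural domain $D(\tilde\Gamma)=\{f\in BMOA_p:Gf'\in BMOA_p\}$. All three equivalences will then be short consequences of (\ref{ssc}), this identification, and the classical Gantmacher characterization of weakly compact operators.

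First I would establish the preparatory facts. Since composition operators are bounded on $BMOA_p$ for $1<p<2$ (by Theorem~1 in Section~3) and the inclusion $VMOA_p\subset BMOA_p$ is $w^*$-dense under \eqref{duality}, a standard $w^*$-continuity argument shows that $T_t^{**}(f)=f\circ\phi_t$ for all $f\in BMOA_p$. Using the Bochner representation $\mathcal R(\lambda,\Gamma)=\int_0^\infty e^{-\lambda t}T_t\,dt$ valid on $VMOA_p$ for $\lambda$ in the right half--plane in $\rho(\Gamma)$, and dualizing against $VMOA_p^*$, one obtains that $\mathcal R(\lambda,\Gamma)^{**}$ coincides with $(\lambda-\tilde\Gamma)^{-1}$ on $BMOA_p$; in particular its range lies in $D(\tilde\Gamma)$. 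The analytic extension in $\lambda$ removes the half-plane restriction. This identification is the one technical point and is exactly the step adapted from \cite{bcdmps}.

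The equivalence (2)$\Leftrightarrow$(3) is then immediate from Gantmacher's theorem: a bounded operator $T$ on a Banach space $Y$ is weakly compact if and only if $T^{**}(Y^{**})\subset Y$ under the canonical embedding. Applying this with $Y=VMOA_p$ and $T=\mathcal R(\lambda,\Gamma)$, and using \eqref{duality}, we get that $\mathcal R(\lambda,\Gamma)$ is weakly compact on $VMOA_p$ iff $\mathcal R(\lambda,\Gamma)^{**}(BMOA_p)\subset VMOA_p$.

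For (1)$\Leftrightarrow$(2) I would use the description \eqref{ssc} together with the identification above and the containment $VMOA_p\subset [\phi_t,BMOA_p]$ coming from Theorem~1 of Section~4. For (2)$\Rightarrow$(1), if $f\in BMOA_p$ satisfies $Gf'\in BMOA_p$, write $h=\lambda f-Gf'\in BMOA_p$; then $f=\mathcal R(\lambda,\Gamma)^{**}(h)\in VMOA_p$ by hypothesis, so $\{f\in BMOA_p:Gf'\in BMOA_p\}\subset VMOA_p$, and closing up via \eqref{ssc} gives $[\phi_t,BMOA_p]\subset VMOA_p$; combined with the reverse inclusion, equality follows. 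For the converse (1)$\Rightarrow$(2), given $h\in BMOA_p$ set $f=\mathcal R(\lambda,\Gamma)^{**}(h)$; then $Gf'=\lambda f-h\in BMOA_p$, so $f\in\{g\in BMOA_p:Gg'\in BMOA_p\}\subset [\phi_t,BMOA_p]=VMOA_p$. The main obstacle, as noted, is the careful justification that $\mathcal R(\lambda,\Gamma)^{**}$ really acts as the resolvent of $\tilde\Gamma=Gf'$ on $BMOA_p$; once this is in place, the rest of the proposition is a short application of \eqref{ssc}, \eqref{duality} and Gantmacher's theorem.
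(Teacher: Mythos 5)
Your plan is correct and follows essentially the same route as the paper, which itself gives no details but defers to the proof of Theorem~4 in \cite{bcdmps}: the duality $VMOA_p^{**}=BMOA_p$, the identification of $\mathcal R(\lambda,\Gamma)^{**}$ with the resolvent of $f\mapsto Gf'$ on $BMOA_p$, Gantmacher's theorem for $(2)\Leftrightarrow(3)$, and the description \eqref{ssc} for $(1)\Leftrightarrow(2)$ are exactly the ingredients of that argument. The one step you rightly flag as the technical core --- justifying $T_t^{**}f=f\circ\phi_t$ via $w^*$-continuity and then passing from the Laplace--transform representation of the resolvent to all of $\rho(\Gamma)$ (e.g.\ via the resolvent identity and the fact that weakly compact operators form an ideal) --- is precisely the content adapted from \cite{bcdmps}.
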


As mentioned above the proof follows the lines of \cite[Theorem 4]{bcdmps} and we omit the details.

\section{Volterra-type Operators on $BMOA_p$}

In this last  section we study  Volterra-type operators $T_g$ and  point out a   connection between
the maximal subspace  $[\phi_t, BMOA_p]$   and some properties of these operators. The operators $T_g$ are
defined by
\[
T_g(f)(z)=\int_0^z f(\zeta) g'(\zeta)\,d\zeta\, \quad f\in H(\D),
\]
where  $g\in H(\D)$ is the inducing symbol.
They were first considered by Ch. Pommerenke \cite{Pom}. He proved that $T_g$ is bounded
on the Hardy space $H^2$ if and only if $g\in BMOA$. This characterization of boundedness was
extended to all Hardy spaces $H^p$ by
A. Aleman and A. Siskakis \cite{Al.Sisk 1} and the operator was subsequently studied on several
spaces of analytic functions by many authors.

On the spaces of our concern, $T_g$  were studied by J. Pau and R. Zhao \cite{PaZh}. They considered  $T_g$ on
  the more general family of  the spaces $F(p,q,s)$  and in particular they proved for $BMOA_p=F(p, p-2, 1)$
  and for $1\leq p<2$ that  $T_g : BMOA_p\to BMOA_p$ is bounded if and only if $g$ satisfies
    \begin{equation}\label{logCar}
\sup_{ I \subset \partial \D}\left(\frac{\left(\log\frac{2}{\vert I \vert}\right)^p}{\vert I \vert}
 \int_{S(I)} \vert g'(z) \vert^p\, (1-\vert z\vert^2)^{p-1}\,dm(z)\right)<\infty
 \end{equation}
  and is compact if and only if the corresponding little-oh condition is valid, i.e. the
  quantity inside the parenthesis goes to $0$ as $|I|\to 0$.

These results were extended by C. Yuan and C. Tong who proved that for $1<p<2$, the operator
$T_g: BMOA \to BMOA_p$ is bounded if and only if the same above condition (\ref{logCar}) holds
for $g$, and is compact if and only if
the corresponding little-oh condition is valid
\cite [Theorem 14 \& Corollary 16, (3)]{YuTo}

In the next theorem we show that for the missing case $p=1$ in the results of \cite{YuTo}, an analogous  characterization
of boundedness and compactness holds.

\begin{theorem}
Let $  g$ be an analytic function on $\D$. Then:
\begin{enumerate}
\item The operator $T_g : BMOA \to BMOA_1 $ is bounded if and only if
 \begin{equation}\label{logCar1}
\sup_{I \subset \partial\D}\left( \frac{\log\frac{2}{|I|}}{| I|} \int_{S(I)} | g'(z)|\,dm(z)\right)
<\infty.
 \end{equation}
 \item The following  are equivalent:
 \begin{enumerate}[(i)]
\item $T_g: BMOA \to BMOA_1$ is compact.
\item The function $g$ satisfies,
\begin{equation}\label{logCar1-0}
\lim_{|I|\to 0}\left(\frac{\log\frac{2}{| I |}}{| I|}
\int_{S(I)} | g'(z)|\,dm(z)\right)=0.
\end{equation}
\item $T_g(BMOA) \subset VMOA_1$.
\end{enumerate}
\end{enumerate}
\end{theorem}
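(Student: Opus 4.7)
The plan is to handle (1) and (2) separately, deriving the equivalences in (2) from a uniform version of the estimates proving (1). For part (1) \emph{necessity}, test the bounded operator $T_g$ against the BMOA-bounded family $f_I(z):=\log\frac{2}{1-\bar z_I z}$, where $z_I=(1-|I|)\xi_I$ and $\xi_I$ is the center of $I$. Since $|1-\bar z_I z|\asymp|I|$ on $S(I)$, one has $|f_I(z)|\gtrsim\log(2/|I|)$ throughout $S(I)$, so
\[
\|T_g\|\gtrsim\|T_g f_I\|_{BMOA_1}\geq\frac{1}{|I|}\int_{S(I)}|f_I||g'|\,dm\gtrsim\frac{\log(2/|I|)}{|I|}\int_{S(I)}|g'|\,dm,
\]
which is exactly (\ref{logCar1}). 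For \emph{sufficiency}, fix $f\in BMOA$ and an arc $I$, set $z_I=(1-|I|)\xi_I$, and decompose
\[
\frac{1}{|I|}\int_{S(I)}|fg'|\,dm\leq\frac{|f(z_I)|}{|I|}\int_{S(I)}|g'|\,dm+\frac{1}{|I|}\int_{S(I)}|f-f(z_I)||g'|\,dm.
\]
The first summand is controlled by $CM\|f\|_{BMOA}$ via the Bloch growth estimate $|f(z_I)|\leq C\|f\|_{BMOA}\log(2/|I|)$ together with (\ref{logCar1}).

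The crux is the oscillation summand, and here the Garsia norm for BMOA enters. Cauchy--Schwarz against the measure $d\nu=|g'|\,dm$ combined with the bound $\nu(S(I))\leq CM|I|/\log(2/|I|)$ from (\ref{logCar1}) reduces matters to controlling $\int_{S(I)}|f-f(z_I)|^2|g'|\,dm$. Using $|1-\bar z_I z|\asymp|I|$ on $S(I)$, we rewrite this integral as $\asymp|I|^2\int_{S(I)}|h|^2\,d\nu$ where $h(z):=\frac{f(z)-f(z_I)}{1-\bar z_I z}\in H^2$. Since (\ref{logCar1}) simultaneously makes $\nu$ an ordinary Carleson measure with constant $\leq CM$, the Carleson embedding $H^2\hookrightarrow L^2(\D,\nu)$ yields $\int_{S(I)}|h|^2\,d\nu\leq CM\|h\|_{H^2}^2$. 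Now the Garsia characterization (\ref{Garcia}) provides
\[
\|h\|_{H^2}^2=\int_{\partial\D}\frac{|f^*(\zeta)-f(z_I)|^2}{|1-\bar z_I\zeta|^2}|d\zeta|\leq\frac{C\|f\|_{BMOA}^2}{1-|z_I|^2}\asymp\frac{\|f\|_{BMOA}^2}{|I|}.
\]
Chaining these bounds delivers a $CM\|f\|_{BMOA}$ control on the oscillation summand, finishing part (1).

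For part (2), the sufficiency estimate above is uniform in $f$ from the unit ball of $BMOA$; with (\ref{logCar1-0}) in place of (\ref{logCar1}) it yields the uniform tail-vanishing $\sup_{\|f\|_{BMOA}\leq1}\sup_{|J|<\delta}\frac{1}{|J|}\int_{S(J)}|fg'|\,dm\to 0$ as $\delta\to 0$. Combined with a Montel-type normal-families argument on BMOA-bounded sequences, this yields compactness, so (ii)$\Rightarrow$(i); applied pointwise in $f$, it gives (ii)$\Rightarrow$(iii). The converse (i)$\Rightarrow$(ii) proceeds by contraposition: if (\ref{logCar1-0}) fails along arcs $I_n$ with $|I_n|\to 0$, the BMOA-bounded sequence $(f_{I_n})$ admits a locally uniformly convergent subsequence, yet its images $T_g f_{I_n}$ retain a positive $BMOA_1$-Carleson contribution on $S(I_n)$, contradicting the compactness criterion. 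Finally, (iii)$\Rightarrow$(i) follows by applying the closed graph theorem to $T_g:BMOA\to VMOA_1$ and invoking the duality $VMOA_1^{**}=BMOA_1$ (the $p=1$ analog of (\ref{duality})) together with standard weak-compactness factorization arguments. The principal obstacle is the sufficiency in part (1): the naive pointwise bound $|f(z)|\lesssim\|f\|_{BMOA}\log(2/(1-|z|))$ is too lossy deep in $S(I)$, since $\log(2/(1-|z|))$ can far exceed $\log(2/|I|)$, and only the Garsia norm---whose weight $1/|1-\bar z_I\zeta|^2$ matches the weight $1/|1-\bar z_I z|^2$ produced by factoring $1-\bar z_I z$ out of $f-f(z_I)$---supplies the sharp integral control.
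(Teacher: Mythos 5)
Your part (1) is essentially the paper's own argument: the same test functions $\log\frac{1}{1-\bar z_I z}$ for necessity, and for sufficiency the same split into $|f(z_I)|\cdot\mu(S(I))/|I|$ plus an oscillation term handled by Cauchy--Schwarz, the Carleson embedding for $d\mu=|g'|\,dm$, and the Garsia norm; your closing remark about why the naive growth estimate fails and the Garsia norm is the right tool is exactly the point the paper emphasizes. Your (ii)$\Rightarrow$(i) and (ii)$\Rightarrow$(iii) are also sound in outline (the uniform-tail-vanishing route to compactness is a legitimate, slightly slicker variant of the paper's explicit $A_n(I)/B_n(I)$ estimates, though for arcs with $|I|\ge\delta$ you still need a covering argument near $\partial\D$, since $S(I)$ is not compactly contained in $\D$).

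The two remaining implications, however, have genuine gaps. For (i)$\Rightarrow$(ii) by contraposition: the sequence $(f_{I_n})$ does \emph{not} tend to $0$ locally uniformly --- it converges to $f_0(z)=\log\frac{2}{1-\bar\zeta z}$ where $\zeta$ is a limit of the centers --- so the compactness criterion only tells you that $\frac{1}{|I_n|}\int_{S(I_n)}|f_{I_n}-f_0||g'|\,dm\to 0$. The leftover term $\frac{1}{|I_n|}\int_{S(I_n)}|f_0||g'|\,dm$ is merely bounded by $\n{T_g(f_0)}_{BMOA_1}$, which yields $\frac{\log(2/|I_n|)}{|I_n|}\int_{S(I_n)}|g'|\,dm=O(1)$ rather than $o(1)$: no contradiction. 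You would need either renormalized test functions that genuinely vanish locally uniformly (e.g.\ $\bigl(\log\frac{2}{1-\bar z_I z}\bigr)^2/\log\frac{2}{|I|}$), or the paper's route: observe that compactness of $T_g:BMOA\to BMOA_1$ forces compactness of the restriction $T_g:BMOA_1\to BMOA_1$ and invoke the Pau--Zhao characterization for $F(1,-1,1)$. For (iii)$\Rightarrow$(i): the duality $VMOA_1^{**}=BMOA_1$ is not available here --- the paper obtains $VMOA_p^{**}=BMOA_p$ from Perfekt's theorem only for $1<p<2$, because that theorem requires the underlying space $D^p_{p-1}$ to be reflexive, and $D^1_0$ is not --- and even granting such a duality, Gantmacher-type factorization would give weak compactness, not compactness. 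The paper instead closes the cycle by proving (iii)$\Rightarrow$(ii) directly, testing against $f_I$ and using $T_g(f_I)\in VMOA_1$; that is the implication you should prove in place of (iii)$\Rightarrow$(i).
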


\begin{proof}
(1) Suppose  (\ref{logCar1}) holds and let $f\in BMOA$. Let $z_I=(1-|I|)\xi$, where $\xi \in \partial\D$ is the center of $I$. Then
\begin{equation}\label{a1}
\n{T_g(f)}_{BMOA_1}\simeq \sup_{I\subset \partial\D}\frac{1}{|I|}\int_{S(I)}|f(z)g'(z)|\,dm(z).
\end{equation}
Setting
$$
d\mu(z)=d\mu_g(z):= |g'(z)|dm(z),
$$
(this notation will be used throughout the proof) we have
\begin{align*}
\frac{1}{|I|}&\int_{S(I)}|f(z)g'(z)|\,dm(z)=\frac{1}{|I|}\int_{S(I)}|f(z)|\,d\mu(z)\\
&\leq   \frac{1}{|I|}\int_{S(I)}|f(z)-f(z_I)|\,d\mu(z)+
|f(z_I)|\frac{1}{|I|}\int_{S(I)}|g'(z)|\,dm(z)\\
&\leq \frac{1}{|I|}\int_{S(I)}|f(z)-f(z_I)|\,d\mu(z)+C\left(\frac{\log\frac{2}{|I|}}{|I|}\int_{S(I)}|g'(z)|\,dm(z)\right)\n{f}_{BMOA}\\
&\leq A_I(f)+C'\n{f}_{BMOA},
\end{align*}
where we have used the growth estimate for  $BMOA$ functions and the hypothesis (\ref{logCar1}).
Using the estimate $|1-\bar{z}_I z|\simeq |I|$ for $z\in S(I)$, the quantity $ A_I(f)$ is
\begin{align*}
A_I(f) &=\frac{1}{|I|} \int_{S(I)}\left|\frac{f(z)-f(z_I)}{1-\bar{z}_I z}\right||1-\bar{z}_I z|\,d\mu(z)\\
&\leq  C\int_{S(I)}\left|\frac{f(z)-f(z_I)}{1-\bar{z}_I z}\right|\,d\mu(z)
\end{align*}
 We can then continue with the last  integral
by repeating the steps in the proof of Theorem \ref{theNec}. In specific,  apply a
Cauchy-Schwarz inequality, use (\ref{logCar1}) and its implication that   $d\mu(z)$ is a Carleson measure, and change to polar coordinates  to obtain
$$
\int_{S(I)}\left|\frac{f(z)-f(z_I)}{1-\bar{z}_I z}\right|\,d\mu(z)\leq \frac{C}{(\log\frac{2}{|I|})^{1/2}}\n{f}_G
$$
where $\n{f}_G$ is the Garsia norm of $f$, which is equivalent to $\n{f}_{BMOA}$. It follows then from (\ref{a1}) that
$\n{T_g(f)}_{BMOA_1}\leq C\n{f}_{BMOA}$ for some constant $C$ and for all $f\in BMOA$,
i.e. $T_g:BMOA \to BMOA_1$ is bounded.

 For the converse, assume that $T_g:BMOA \to BMOA_1$ is bounded and use the test functions
 $$
 f_a(z)=\log\frac{1}{1-\bar{a}z}, \quad a\in \D,
 $$
 which form a bounded set in $BMOA$. If $a=z_I$ then $|f_a(z)|\simeq \log\frac{2}{|I|}$ for $z\in S(I)$ and so
\begin{align*}
\frac{\log\frac{2}{|I|}}{|I|}\int_{S(I)}|g'(z)|\,dm(z)&\leq C\frac{1}{|I|}\int_{S(I)}|f_a(z)||g'(z)|\,dm(z)\\
&\leq C'\n{T_g(f_a)}_{BMOA_1}\\
&\leq C' \n{T_g}\n{f_a}_{BMOA}\\
&\leq C''\n{T_g}
\end{align*}
so (\ref{logCar1}) holds.

(2) We first show that (i) and (ii) are equivalent. First assume (ii) holds, then we will show that
$T_g : BMOA\to BMOA_1$ is compact. Using a well known characterization of compactness of operators
on function spaces, see for example \cite[Lemma 3.7]{Tjani},  it suffices to show that if  $(f_n)$ is a
bounded sequence in BMOA such that $f_n\to 0$ uniformly on each compact subset of $\D$
then $\n{T_g(f_n)}_{BMOA_1}\to 0$. Equivalently we want to show that
\begin{equation}\label{compact}
\sup_{I}\frac{1}{\abs{I}}\int_{S(I)}\abs{f_n(z)}\abs{g'(z)}\,dm(z) \to 0 \quad \text{as}\,\,n\to\infty.
\end{equation}
To show this we write
\begin{align*}
\frac{1}{|I|}\int_{S(I)}|f_n(z)| |g'(z)|\,dm(z)&\leq
 \frac{1}{|I|}\int_{S(I)}|f_n(z)-f_n(z_I)| |g'(z)|\,dm(z)\\
  &+|f_n(z_I)|\frac{1}{|I|}\int_{S(I)}|g'(z)|\,dm(z)\\
&=A_n(I)+B_n(I),
\end{align*}
where $z_I$ is as before.

We first treat $B_n(I)$. Using  the assumptions on $(f_n)$ we have
$$
|f_n(z_I)|\leq C\n{f_n}_{BMOA}\log\frac{2}{1-|z_I|}\leq C'\log\frac{2}{|I|},
$$
for all $I$ and $n$, so that
$$
B_n(I)\leq C'\frac{\log\frac{2}{|I|}}{\abs{I}}\int_{S(I)}\abs{g'(z)}\,dm(z)
$$
for all $n$. Now given  $\e>0$, by the  hypothesis (\ref{logCar1-0}) and the above inequality, there is
a $\d>0$ such that if $|I|< \d$ then $B_n(I)<\e/2$ for all $n$, thus,
$ \sup_{|I|<\d}B_n(I)\leq \e/2$ for all $n$.

On the other hand if $I$ is an arc with  $|I|\geq \d$ then  $|z_I|=1-|I|\leq 1-\d$, so $z_I$ is inside the
closed disc $\{|z|\leq  1-\d\}$. This and  the assumption  of the convergence of $(f_n)$  to $0$, uniformly
on compact sets, imply that there is an $N_0$ such that for $n\geq N_0$
$$
|f_n(z_I)|\frac{1}{|I|}\int_{S(I)}|g'(z)|\,dm(z) \leq
 (\sup_{|z|\leq  1-\d}|f_n(z)|)\n{g}_{BMOA_1} \leq \e/2,
$$
thus $\sup\limits_{|I|\geq \d}B_n(I)\leq \e/2$, for $n\geq N_0$.
 It follows then that
\begin{align*}
\sup_{I}B_n(I)=\max\big\{\sup_{|I|< \d}B_n(I),
\, \sup_{|I|\geq \d}B_n(I) \big \}
\leq \sup_{|I|<\d}B_n(I)+ \sup_{|I|\geq  \d}B_n(I)
\leq \e,
\end{align*}
for $n\geq N_0$, therefore
\begin{equation}\label{comp b}
\lim_{n\to\infty}\sup_{I}B_n(I)=\lim_{n\to\infty}\left(\sup_{I}|f_n(z_I)|\frac{1}{|I|}\int_{S(I)}|g'(z)|\,dm(z)\right)=0.
\end{equation}

Now we consider  $A_n(I)$. Using the estimate $|1-\bar{z_I}z | \simeq |I|$ for $z\in S(I)$
and recalling that   $d\mu(z)=|g'(z)|dm(z)$  we have
\begin{equation}\label{e1}
A_n(I) \simeq
\int_{S(I)} \frac{\abs{f_n(z)-f_n(z_I)}}{\vert 1-\bar{z_I}z\vert}\,d\mu(z).
\end{equation}
The  hypothesis (\ref{logCar1-0})  implies
$$
\mu{(S(I))} \leq C \frac{|I|}{\log\frac{2}{|I|}} \quad \text{for all}\,\, I,
$$
 and applying the  Cauchy-Schwartz inequality we obtain
\begin{align*}
A_n(I)^2&\simeq \left(\int_{S(I)} \frac{\abs{f_n(z)-f_n(z_I)}}{\vert 1-\bar{z_I}z\vert}\,d\mu(z)\right)^2\\
&\leq \mu(S(I))  \int_{S(I)}\Big\vert \frac{f_n(z)-f_n(z_I)}{ 1-\bar{z_I}z} \Big\vert^2\,d\mu(z)\\
&\leq C\frac{\abs{I}}{\log\frac{2}{\abs{I}}}
 \int_{\D}\Big\vert \frac{f_n(z)-f_n(z_I)}{ 1-\bar{z_I}z} \Big\vert^2\, d\mu(z)\\
&\simeq\frac{\abs{I}}{\log\frac{2}{\abs{I}}} \int_{\D\setminus D_r}
\Big\vert \frac{f_n(z)-f_n(z_I)}{ 1-\bar{z_I}z} \Big\vert^2\, d\mu(z)
+\frac{\abs{I}}{\log\frac{2}{\abs{I}}} \int_{D_r}
\Big\vert \frac{f_n(z)-f_n(z_I)}{ 1-\bar{z_I}z} \Big\vert^2\, d\mu(z),
\end{align*}
where $D_r=\{z\in \D:\,\abs{z}\leq r\}$ and  $r\in (0,1)$ will  be chosen later.
We consider each term of this sum separately.

For the integral in the first term, putting  $d\mu_r=d\mu|_{\D\setminus D_r}$ and applying (\ref{CarH2}) in the first inequality, we have
\begin{align*}
\int_{\D\setminus D_r}& \left| \frac{f_n(z)-f_n(z_I)}{ 1-\bar{z_I}z}
\right|^2\, d\mu(z) = \int_{\D}
 \left | \frac{f_n(z)-f_n(z_I)}{ 1-\bar{z_I}z} \right |^2\, d\mu_r(z)\\
&\leq C \n{\mu_r}_{CM}\int_{\partial\D} \left | \frac{f_n(\zeta)-f_n(z_I)}{ 1-\bar{z_I}\zeta}
\right|^2\,\abs{d\zeta}\quad \\
&=C \n{\mu_r}_{CM}\frac{1-|z_I|}{|I|} \int_{\partial\D} \left| \frac{f_n(\zeta)-f_n(z_I)}{ 1-\bar{z_I}\zeta}
\right |^2\,\abs{d\zeta}\\
&\leq C'\n{\mu_r}_{CM}\frac{1}{|I|}\sup_{a\in \D}\left((1-|a|^2)\int_{\partial\D}
\Big\vert \frac{f_n(\zeta)-f_n(a)}{ 1-\bar{a}\zeta}
\Big\vert^2\,\abs{d\zeta}\right)\\
&\leq C'' \frac{1}{|I|}\n{\mu_r}_{CM} \n{f_n}_{BMOA}^2\\
&\leq C''' \frac{1}{|I|}\n{\mu_r}_{CM},
\end{align*}
for all $I$.  At this point, since $\mu$ is a vanishing Carleson
measure, we can use   \cite[Lemma 15]{YuTo} which says that $\lim\limits_{r\to 1^{-}}\n{\mu_r}_{CM}=0$.
 Thus given $\e >0$ we can find an  $r$ such that
\begin{equation}\label{an1}
\sup_{I}\,\frac{\abs{I}}{\log\frac{2}{\abs{I}}} \, \int_{\D\setminus D_r}
\Big\vert \frac{f_n(z)-f_n(z_I)}{ 1-\bar{z_I}z} \Big\vert^2\, d\mu(z)
< \e ,\quad \text{for all}\,\, n.
\end{equation}

Now we consider  the  second term, with $r$ fixed as above. For $|z|\leq r$ we have
$|1-\bar{z}_Iz|^2>(1-r)^2$ and $\sup\limits_{|z|\leq r}|g'(z)|<\infty$,   so there are constants such that
\begin{align*}
\int_{D_r}
\left |\frac{f_n(z)-f_n(z_I)}{ 1-\bar{z_I}z} \right|^2\, d\mu(z)&\leq C_r\int_{D_r}
 | f_n(z)|^2 \, d\mu(z)+ C_r |f_n(z_I)|^2 \int_{D_r} \, d\mu(z)\\
  &\leq C_r' \sup_{|z|\leq r}|f_n(z)|^2+C_r'|f_n(z_I)|^2,
\end{align*}
and we then have
\begin{align*}
\frac{|I|}{\log\frac{2}{|I|}} \int_{D_r}
\left | \frac{f_n(z)-f_n(z_I)}{ 1-\bar{z_I}z} \right|^2\, d\mu(z)
&\leq C_r'\frac{|I|}{\log\frac{2}{|I|}}\sup_{|z|\leq r}|f_n(z)|^2+
C_r'\frac{|I|}{\log\frac{2}{|I|}}|f_n(z_I)|^2\\
&\leq C_r''\sup_{|z|\leq r}|f_n(z)|^2+
C_r'\frac{|I|}{\log\frac{2}{|I|}}|f_n(z_I)|^2.
\end{align*}
By the  hypothesis on $(f_n)$, the first term above tends to $0$ as $n\to \infty$.
  In addition for each small positive $\d$ we have,
\begin{align*}
\sup_{I}\frac{|I|}{\log\frac{2}{|I|}}|f_n(z_I)|^2 &\leq \sup_{|I|<\d}\frac{|I|}{\log\frac{2}{|I|}}|f_n(z_I)|^2
+\sup_{|I|\geq \d}\frac{|I|}{\log\frac{2}{|I|}}|f_n(z_I)|^2\\
&\leq C \sup_{|I|<\d}\frac{|I|}{\log\frac{2}{|I|}}\left(\log\frac{2}{|I|}\right)^2+
\sup_{|I|\geq \d}\frac{|I|}{\log\frac{2}{|I|}}|f_n(z_I)|^2 \\
&=C\sup_{|I|<\d}|I|\log\frac{2}{|I|}+\sup_{|I|\geq \d}\frac{|I|}{\log\frac{2}{|I|}}|f_n(z_I)|^2.
\end{align*}
Now for the given $\e$ we can choose  $\d$ such that if $|I|<\d$, then ${C|I|\log\frac{2}{|I|}<\e}$, so that
$$
\sup_{|I|<\d}\frac{|I|}{\log\frac{2}{|I|}}|f_n(z_I)|^2<\e,
$$
 for all $n$. At the same time if $|I|\geq \d$ then $|z_I|\leq 1-\d$ and by the hypothesis on $(f_n)$
and the fact that $ \frac{|I|}{\log\frac{2}{|I|}}$ remains bounded,  we can find $N_1$ such that
$$
\sup_{|I|\geq \d}\frac{|I|}{\log\frac{2}{|I|}}|f_n(z_I)|^2<\e
$$
for $n\geq N_1$. It follows from the last two inequalities that
$$
\lim_{n\to\infty}  \sup_{I}\frac{|I|}{\log\frac{2}{|I|}}|f_n(z_I)|^2=0.
$$
Putting the above together gives
\begin{equation}
\lim_{n \to \infty}\left(\sup_{I} \frac{\abs{I}}{\log\frac{2}{\abs{I}}} \, \int_{D_r}
\Big\vert \frac{f_n(z)-f_n(z_I)}{ 1-\bar{z_I}z} \Big\vert^2\, d\mu(z)\right) = 0.
\end{equation}
From this  and (\ref{an1}) we obtain $\lim\limits_{n\to\infty}\sup_{I}A_n(I)=0$, and finally in combination with
(\ref{comp b}) the desired conclusion (\ref{compact}) is obtained.

To show the converse assume  $T_g: BMOA \to BMOA_1$ is compact. Then, taking into account that there is a constant $C$ such that
for each $f\in BMOA_1$, $\n{f}_{BMOA}\leq C\n{f}_{BMOA_1}$, we can easily verify that the restriction
$$
T_g{|_{BMOA_1}}: BMOA_1\to BMOA_1
$$
is a compact operator. It then follows from \cite[Theorem 5.2 (ii)]{PaZh}, applied for  the space $BMOA_1=F(1, -1, 1)$,  that
$$
\lim_{|a|\to 1}\log\frac{2}{1-|a|^2}\int_{\D}|g'(z)|(1-|\phi_a(z)|^2)\,dm(z) =0
$$
and this condition on $g$ is well known to be equivalent to  (\ref{logCar1-0}).

Next we show that conditions (ii) and (iii) are equivalent. Suppose (ii) holds and  $f\in BMOA$, we need to show that
$T_g(f)\in VMOA_1$ or equivalently that
$$
\lim_{|I|\to 0} \frac{1}{|I|}\int_{S(I)}|f(z)| |g'(z)|\,dm(z) = 0.
$$
To do this we can  follow the steps in the proof of Theorem \ref{theNec} for the value  $p=1$,
setting $d\mu(z)=|g'(z)|dm(z)$. As in that proof the calculations  lead to an inequality involving the Garsia norm of $f$
and finally to the conclusion that the above limit is $0$, so condition (\ref{logCar1-0}) implies that $T_g(BMOA)\subset VMOA_1$.

 To show the converse we use the test functions
\[
f_I(z)=\log\frac{1}{1-\bar{z}_I z}\in BMOA,
\]
thus $T_g(f_I)\in VMOA_1$. Using  the standard estimate $|f_I(z)|\simeq \log\frac{2}{|I|}$ for $z\in S(I)$ we have
\begin{align*}
\frac{\log\frac{2}{|I||}}{|I|}\int_{S(I)}|g'(z)|\,dm(z)&\leq \frac{C}{|I|} \int_{S(I)}|f_I(z)||g'(z)|\,dm(z)\\
&=\frac{C}{|I|}\int_{S(I)}|T_g(f_I)'(z)|\,dm(z)
\end{align*}
which gives the desired conclusion.
\end{proof}

To continue the study of compactness and weak compactness of Volterra operators on $BMOA_p$ we will need some
results of K.M. Perfekt from \cite{Perf1},\cite{Perf2}. In these articles there is a general construction of pairs of spaces $(M_0, M)$
obtained by little oh and big oh conditions respectively. More precisely let
$X, Y$ be two Banach spaces, where $X$ is reflexive and separable,
and let  $\mathcal{L}$ be  a family of bounded linear operators $L: X\to Y$, equipped with a $\sigma$-compact,
locally compact Hausdorff
topology $\tau$, such that for every $x\in X$ the map $T_x:\mathcal{L}\to Y$, $T_x(L)=L(x)$ is continuous.
A pair of spaces is then defined
\[
\begin{split}
M&=M(X,\mathcal{L})= \{x\in X: \sup_{L\in\mathcal{L}}\n{L(x)}_Y<\infty\},
\\
M_0&=M_0(X,\mathcal{L})= \{x\in M: \limsup_{L\to\infty}\n{L(x)}_Y=0\}
\end{split}
\]
where $L\to\infty$ in the definition of $M_0$ means that $L$ eventually escapes all compact subsets of the topological space
$(\mathcal{L}, \tau)$. Under appropriate  assumptions on $\mathcal{L}$ the quantity
  $\n{x}_M=\sup_{L\in \mathcal{L}}\n{L(x)}_Y$ is a norm on $M$ and, by replacing
$X$ by the closure of $M$ in $X$ and repeating the construction,  it may be assumed that $M$ is dense in $X$.
It is clear that $M_0$ is a closed subspace of $M$. This construction puts in a very general frame
 classical pairs such as  the mean oscillation pair $(VMOA, BMOA)$,
the Bloch pair $(\mathcal{B}_0, \mathcal{B})$, other M\"{o}bius invariant pairs,
Lipschitz-H\"{o}lder spaces, etc. Using  functional
analysis methods it is then proved under mild additional assumptions that the duality
$M_0^{**}=M$ holds in this general context
and also the following theorem on weak compactness.

\begin{thmx}(\cite[Theorem 3.2]{Perf2}). \textit{Let $Z$ a Banach space. A bounded linear operator $T: M_0(X, \mathcal{L})\to Z$
is weakly compact if and only if for each $\e>0$, there is a $C>0$ such that
$$
\n{T(x)}_Z\leq C\n{x}_X+ \e\n{x}_M, \quad \text{for each}\,\,x\in M_0(X, \mathcal{L})
$$
}
\end{thmx}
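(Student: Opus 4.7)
The plan is to derive the theorem from the identification $M_0^{**}=M$ (one of the main outputs of Perfekt's machinery, stated in the preceding paragraph) together with the standard Gantmacher-type criterion: a bounded operator $T\colon E\to Z$ is weakly compact if and only if its bi-adjoint $T^{**}\colon E^{**}\to Z^{**}$ takes values in $Z$. Applied to $E=M_0$, weak compactness of $T$ becomes the single condition $T^{**}(M)\subset Z$, and the task reduces to converting this qualitative condition into the quantitative two-norm inequality.

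For sufficiency, assume the inequality and fix $\xi\in M$; we show $T^{**}(\xi)\in Z$. By Goldstine's theorem there is a net $(x_\alpha)\subset M_0$ with $\n{x_\alpha}_M\leq \n{\xi}_M$ converging to $\xi$ in the weak-$*$ topology of $M_0^{**}=M$. In Perfekt's framework this bounded weak-$*$ convergence is realized as weak convergence in the reflexive space $X$, so by Mazur's theorem one may pass to convex combinations $y_\beta$ converging to $\xi$ in the $X$-norm while remaining bounded in $M$. The inequality applied to $y_\beta-y_{\beta'}$ reads
\[
\n{T y_\beta-T y_{\beta'}}_Z \leq C_\e \n{y_\beta-y_{\beta'}}_X+ 2\e\n{\xi}_M,
\]
and since the first term vanishes in the limit while $\e$ is arbitrary, $(T y_\beta)$ is Cauchy in $Z$; its limit, which must equal $T^{**}(\xi)$ by the weak-$*$-to-weak-$*$ continuity of $T^{**}$, lies in $Z$.

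For necessity, assume $T$ is weakly compact, so $T^{**}$ maps $M$ into $Z$. Arguing by contradiction, suppose the inequality fails for some $\e_0>0$; then there exists a sequence $(x_n)\subset M_0$ with $\n{x_n}_M=1$, $\n{x_n}_X\to 0$ and $\n{T x_n}_Z\geq \e_0$. The conditions on $(x_n)$ force $x_n\to 0$ in the weak-$*$ topology of $M=M_0^{**}$, so $T(x_n)=T^{**}(x_n)\to 0$ weak-$*$ in $Z^{**}$ and hence weakly in $Z$. Mazur's theorem produces convex combinations $u_k$ of the tails $(x_n)_{n\geq k}$ with $\n{T u_k}_Z\to 0$, while $\n{u_k}_M\leq 1$ and $\n{u_k}_X\to 0$; iterating this reduction along a diagonal subsequence and comparing against the uniform lower bound $\n{T x_n}_Z\geq \e_0$ yields the desired contradiction.

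The main obstacle is in the necessity direction: upgrading the qualitative inclusion $T^{**}(M)\subset Z$ to the quantitative two-norm inequality requires that the weak-$*$ topology on bounded subsets of $M=M_0^{**}$ be sufficiently compatible with the weak topology of the reflexive space $X$, so that the Mazur-type extraction preserves both the $M$-norm bound and the $X$-norm decay simultaneously. This compatibility is exactly the structural input provided by the $\sigma$-compact, locally compact topology on the operator family $\mathcal L$ in Perfekt's construction, which also underlies the duality $M_0^{**}=M$ used throughout.
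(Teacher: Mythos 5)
First, a point of orientation: the paper does not prove this statement at all --- it is quoted verbatim from Perfekt \cite[Theorem 3.2]{Perf2} and used as a black box, so there is no in-paper proof to compare against. Judged on its own merits, your sufficiency argument is essentially the right one (and essentially Perfekt's): Gantmacher reduces weak compactness to $T^{**}(M)\subset Z$, Goldstine plus the identification of bounded weak-$*$ convergence in $M=M_0^{**}$ with weak convergence in the reflexive space $X$ lets you run Mazur, and the two-norm inequality then makes $(Ty_\beta)$ norm-Cauchy with limit $T^{**}(\xi)\in Z$. That half is sound, modulo the structural facts you correctly flag as inputs from Perfekt's construction.

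The necessity direction, however, has a genuine gap. You correctly produce $(x_n)\subset M_0$ with $\n{x_n}_M=1$, $\n{x_n}_X\to 0$ and $\n{Tx_n}_Z\geq \e_0$, and correctly deduce that $Tx_n\to 0$ weakly in $Z$. But a weakly null sequence with norms bounded below by $\e_0$ is not a contradiction with anything --- such sequences exist in every Banach space without the Schur property, and they lie inside weakly compact sets all the time (the unit vector basis of $\ell^2$ is the standard example). Mazur's theorem only tells you that \emph{convex combinations} of the $Tx_n$ have small norm, which is perfectly compatible with $\n{Tx_n}_Z\geq\e_0$ for every $n$; no amount of "iterating along a diagonal subsequence" extracts a contradiction from this alone. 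The missing idea is structural: one must use the $\sigma$-compact, locally compact topology on $\mathcal{L}$ to run a gliding-hump argument. Since $\n{x_n}_X\to 0$, the suprema $\sup_{L\in K}\n{Lx_n}_Y$ vanish on every compact $K\subset\mathcal{L}$, while $x_n\in M_0$ forces $\n{Lx_n}_Y$ to be small outside a compact set; together these let you pass to a subsequence $(x_{n_k})$ whose "supports" in $\mathcal{L}$ are essentially disjoint, so that $a\mapsto\sum_k a_kx_{n_k}$ embeds $c_0$ isomorphically into $M_0$. If $T$ were weakly compact, its restriction to this copy of $c_0$ would be weakly compact, hence compact (equivalently, completely continuous, by the Dunford--Pettis property of $c_0$), forcing $\n{Tx_{n_k}}_Z\to 0$ along the weakly null basis --- and \emph{that} contradicts $\n{Tx_{n_k}}_Z\geq\e_0$. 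Without the $c_0$-copy and the Dunford--Pettis step, the contradiction never materializes.
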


\vspace{0.2cm}
In addition as a corollary, the same above characterization holds for weak compactness of  operators
$T: M(X, \mathcal{L})\to Z$ provided that they are $weak^{*}-weak$ continuous with respect to the
duality $M_0^{**}=M$.

To obtain the pair of spaces $(VMOA_p,\, BMOA_p)$, we apply the above construction
with $X=Y=D^p_{p-1}$,   $ 1<p< 2$, and
$$
\mathcal{L}=\{L_a: a\in \D\},  \quad L_a(f)=f\circ\vf_a-f(a),
$$
 with $\vf_a$ the M\"{o}bius automorphisms of $\D$. We can then apply the above theorem as
 in \cite[Example 4]{Perf2} to obtain information about the Volterra
 operators $T_g$ acting on the spaces, thus generalizing results from \cite{SiskZha}, \cite{bcdmps}, \cite{Laitila}, concerning
 compactness and weak compactness of $T_g$ on the pair $(VMOA, BMOA)$.

\begin{theorem}\label{thm7}
Let $1<p < 2$ and  $g$ analytic on $\D$
such that the Volterra operator $T_g: BMOA_p\to BMOA_p$  is bounded. Then the following  are equivalent:

\begin{enumerate}[(i)]
\item $T_g: VMOA_p \to VMOA_p$ is weakly compact\\
\item $T_g(BMOA_p) \subseteq VMOA_p $\\
\item $T_g: BMOA_p \to BMOA_p$ is weakly compact\\
\item The function $g$ satsifies
$$
 \lim_{|I|\to 0}\frac{\left(\log\frac{2}{| I |}\right)^p}{| I|}
\int_{S(I)}| g'(z) |^p (1-| z|^2)^{p-1}\,dm(z)=0.
$$
\end{enumerate}
\end{theorem}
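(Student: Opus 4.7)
The approach is to embed everything in the Perfekt framework recorded immediately before the theorem, use it to make (i), (ii), (iii) mutually equivalent, and then identify them with (iv) by two test-function/splitting arguments.

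The pair $(VMOA_p, BMOA_p)$ fits Perfekt's $(M_0(X,\mathcal{L}), M(X,\mathcal{L}))$ construction with $X=Y=D^p_{p-1}$ (reflexive and separable for $1<p<2$) and $\mathcal{L}=\{L_a : a\in\D\}$, $L_a(f)=f\circ\vf_a-f(a)$, so the duality (\ref{duality}) is in force. The bounded operator $T_g:BMOA_p\to BMOA_p$ is $weak^{*}$--$weak^{*}$ continuous because it is pointwise defined and norm bounded, hence coincides with the bitranspose of $T_g|_{VMOA_p}$. Gantmacher's theorem then yields (i)$\Leftrightarrow$(iii), while the general principle from \cite{Perf1, Perf2} that a $weak^{*}$--$weak^{*}$ continuous bitranspose $S^{**}:M\to M$ is weakly compact iff $S^{**}(M)\subseteq M_0$ delivers (i)$\Leftrightarrow$(ii). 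The remaining task is to pair this cluster with (iv).

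For (iv)$\Rightarrow$(ii) I would mirror the proofs of Theorem \ref{theNec} and Theorem 1 of Section 5 above. Set $d\mu(z)=|g'(z)|^p(1-|z|^2)^{p-1}\,dm(z)$, so (iv) gives the enhanced Carleson bound $\mu(S(I))\lesssim|I|/(\log(2/|I|))^p$. Given $f\in BMOA_p$ and an arc $I$ with centre $\xi$ and $z_I=(1-|I|)\xi$, the splitting $f(z)=(f(z)-f(z_I))+f(z_I)$ reduces the verification of $T_g(f)\in VMOA_p$ to two pieces: the constant piece is controlled by $|f(z_I)|^p\mu(S(I))/|I|$ and tends to $0$ by combining the BMOA-growth estimate $|f(z_I)|\lesssim\log(2/|I|)\n{f}_{BMOA_p}$ with (iv); the oscillation piece is handled through $|1-\bar z_I z|\simeq|I|$ on $S(I)$, H\"older's inequality with exponents $2/p,\,2/(2-p)$, the $H^2$-embedding provided by the Carleson character of $\mu$, and the Garsia-norm identity (\ref{Garcia}) applied to $f\in BMOA_p\subset BMOA$.

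For the reverse (ii)$\Rightarrow$(iv), I would test with the boundary logarithms $h_\zeta(z)=\log(1/(1-\bar\zeta z))$ for $\zeta\in\partial\D$, each of which lies in $BMOA_p$ for $1<p<2$ with $\n{h_\zeta}_{BMOA_p}$ independent of $\zeta$ by rotation invariance. By (ii) each $T_g(h_\zeta)\in VMOA_p$, and specializing the vanishing Carleson condition on $T_g(h_\zeta)$ to arcs $I$ centred at $\zeta$, where $|h_\zeta(z)|\simeq\log(2/|I|)$ on $S(I)$, gives $(\log(2/|I|))^p\mu(S(I))/|I|\to 0$ along such arcs. To upgrade this to the uniform statement (iv) I would argue by contradiction: if (iv) fails, pick arcs $I_n$ with $|I_n|\to 0$ and violation $\geq\delta>0$, extract a subsequence whose centres converge to some $\zeta\in\partial\D$, and enlarge $I_n$ to $\tilde I_n$ of doubled length centred at $\zeta$; the violation survives for $\tilde I_n$, contradicting the vanishing of the corresponding quantity for $T_g(h_\zeta)$. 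The main obstacle is this final uniformity transfer via compactness of $\partial\D$; it works precisely because (iv) is rotation-invariant and the logarithmic weight $\log(2/|I|)$ scales correctly when $I$ is slightly enlarged.
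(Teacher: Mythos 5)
Your handling of the equivalences (i)$\Leftrightarrow$(ii)$\Leftrightarrow$(iii) -- identifying $T_g$ with $(T_g|_{VMOA_p})^{**}$ under the Perfekt duality $VMOA_p^{**}=BMOA_p$ and invoking Gantmacher -- is exactly the paper's argument. Your route for (iv)$\Rightarrow$(ii) differs from the paper's: the paper simply quotes Pau--Zhao \cite[Theorem 5.2(ii)]{PaZh} to get that (iv) implies $T_g$ is compact, hence weakly compact, on $BMOA_p$, whereas you re-run the splitting $f=(f-f(z_I))+f(z_I)$ with the H\"older/Carleson-embedding/Garsia-norm machinery of Theorem \ref{theNec}. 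That is a sound and more self-contained alternative.

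The genuine gap is in your (ii)$\Rightarrow$(iv). If (iv) fails along arcs $I_n$ with $|I_n|\to 0$, centres $\xi_n\to\zeta$ and $\frac{(\log(2/|I_n|))^p}{|I_n|}\mu(S(I_n))\ge\delta$, nothing controls $|\xi_n-\zeta|$ relative to $|I_n|$. When $|\xi_n-\zeta|\gg|I_n|$ the single test function $h_\zeta$ is useless: on $S(I_n)$ one has $|h_\zeta(z)|\simeq\log\frac{2}{|1-\bar\zeta z|}\simeq\log\frac{2}{|\xi_n-\zeta|}\ll\log\frac{2}{|I_n|}$, so testing $T_g(h_\zeta)\in VMOA_p$ on $S(I_n)$ does not recover the factor $(\log(2/|I_n|))^p$. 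The enlargement variant fails for the same reason: an arc $\tilde I_n$ centred at $\zeta$ with $S(I_n)\subset S(\tilde I_n)$ must have $|\tilde I_n|\gtrsim|\xi_n-\zeta|+|I_n|$, and then the lower bound you inherit is only $\delta\cdot\frac{|I_n|}{|\tilde I_n|}\cdot\frac{(\log(2/|\tilde I_n|))^p}{(\log(2/|I_n|))^p}$, which tends to $0$ when $|I_n|/|\tilde I_n|\to 0$; the violation does not survive, and the enlargement is not ``slight''. The paper closes exactly this hole by using the arc-adapted functions $f_n(z)=\log\frac{1}{1-\bar w_n z}$ and the decomposition $f_n=f_0+h_n$ with $h_n(z)=\log\frac{1-\bar\zeta z}{1-\bar w_n z}$: the $f_0$-contribution vanishes because $T_g(f_0)$ is one fixed function of $VMOA_p$, while the $h_n$-contribution is controlled by $\n{T_g(h_n)}_{BMOA_p}^p$, which is sent to $0$ by the \emph{quantitative} weak-compactness inequality of Perfekt (Theorem A), $\n{T_g(h_n)}_{BMOA_p}\le C\n{h_n}_{D^p_{p-1}}+\e\n{h_n}_{BMOA_p}$, combined with $\n{h_n}_{D^p_{p-1}}\to 0$ and $\sup_n\n{h_n}_{BMOA_p}<\infty$. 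That quantitative input from (iii) is indispensable here: condition (ii) applied one test function at a time gives no uniformity in the arc, so your argument as written does not close.
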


\begin{proof}
Starting with the duality $(VMOA_p)^{**}=BMOA_p$ it is easy to verify that
 $(T_g|_{VMOA_p})^{**}=T_g$. Hence, the
equivalence of $(i), (ii)$ and $(iii)$ is immediate  due to Gantmacher's Theorem \cite[Theorem 5.23]{AlBu}.
We will prove the equivalence between $(iii), (iv)$.

Suppose $(iii)$ holds. Consider a sequence of arcs $\{I_n\}$ such that $|I_n|\to 0$, and let
 $w_n=(1-|I_n|)\zeta_n $ where $\zeta_n \in \partial \D$ is the center of the arc $I_n$.
Without loss of generality we may assume that $\lim\limits_{n\to \infty}w_n=\zeta\in \partial \D$.
Consider the functions
 $$
 f_n(z)=\log\frac{1}{1-\bar{w}_n z},\quad f_0(z)=\log\frac{1}{1-\bar{\zeta}z},
 \quad h_n(z)=\log\frac{1-\bar{\zeta}z}{1-\bar{w}_n z}\,,
 $$
 and notice that $\n{h_n}_{BMOA_p}\leq C\n{\log\frac{1}{1-z}}_{BMOA_p}$, and also that
 \begin{equation}\label{test-funct}
 \lim_{| I_n | \to 0}\n{h_n}_{D^p_{p-1}}=0.
 \end{equation}
  For $z \in S(I_n)$ we have that $| f_n(z)|\simeq\log\frac{2}{| I_n |}$, hence
\begin{align*}
\frac{\left(\log\frac{2}{|I_n|}\right)^p}{|I_n|} &\int_{S(I_n)} | g'(z)|^p (1-| z|^2)^{p-1}\,dm(z)\\
&\leq C \frac{1}{| I_n|} \int_{S(I_n)}| f_n(z) |^p | g'(z)|^p (1-| z |^2)^{p-1}\,dm(z)\\
&\leq C \frac{1}{| I_n|} \int_{S(I_n)}| f_0(z) |^p | g'(z)|^p (1-| z |^2)^{p-1}\,dm(z)\\
& \quad +C \frac{1}{| I_n|} \int_{S(I_n)}| f_n(z)-f_0(z) |^p | g'(z)|^p (1-| z |^2)^{p-1}\,dm(z)\\
&\leq C\frac{1}{|I_n |} \int_{S(I_n)}| (T_g(f_0)(z))' |^p  (1-| z |^2)^{p-1}\,dm(z)
+C\n{T_g(h_n)}^p_{BMOA_p}\\
&=A_n+B_n
\end{align*}

Since $f_0 \in BMOA_p$ from the equivalence of $(ii)$ and $(iii)$ we have that $T_g(f_0)\in VMOA_p$
thus  $\lim\limits_{n\to\infty}A_n= 0$. In addition by the hypothesis and the theorem mentioned  above
 \cite[Theorem 3.2]{Perf2},  for  $\e>0$ there is a constant $C$ such that
\begin{align*}
\n{T_g(h_n)}_{BMOA_p}&\leq C\n{h_n}_{{D^p_{p-1}}}+\e\n{h_n}_{BMOA_p}\\
&\leq C\n{h_n}_{{D^p_{p-1}}}+\e C'\n{\log\frac{1}{1-z}}_{BMOA_p}.
\end{align*}
In combination with (\ref{test-funct}) it follows then that $\lim\limits_{n\to\infty}B_n= 0$  and the desired condition $(iv)$ follows.

Conversely if  $(iv)$ holds then from \cite[Theorem 5.2(ii)] {PaZh}  $T_g$ is compact on $BMOA_p$ and $(iii)$  follows.
\end{proof}

We collect in a corollary the information  about Volterra operators on $BMOA_p$.
\begin{corollary}\label{collect}
Let $1 < p < 2$ and let $g$ be analytic on $\D$, such that the operator 
$T_g: BMOA_p\to BMOA_p$ is bounded.
Then the  following  are equivalent:
\begin{enumerate}[(i)] 
{\item $T_g: BMOA \to BMOA_p$ is compact\\

\item$T_g: BMOA_p \to BMOA_p$ is compact\\

\item $T_g: BMOA_p \to BMOA_p$ is weakly compact\\

\item $T_g: VMOA_p \to VMOA_p$ is weakly compact\\
 
\item $T_g(BMOA_p) \subseteq VMOA_p $\\

\item $T_g(BMOA) \subseteq VMOA_p $\\

\item The function $g$ satisfies
\[
\lim_{|I|\to 0}\frac{\left(\log\frac{2}{|I|}\right)^p}{|I|}
 \int_{S(I)} | g'(z)|^p (1-| z|^2)^{p-1}\,dm(z)=0.
\]}
\end{enumerate}
\end{corollary}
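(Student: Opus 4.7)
The plan is to route all seven conditions through the little-oh log-Carleson condition (vii), most of which is already available from Theorem \ref{thm7} and from the cited literature. Specifically, Theorem \ref{thm7} yields directly the equivalences (iii) $\Leftrightarrow$ (iv) $\Leftrightarrow$ (v) $\Leftrightarrow$ (vii). Next, \cite[Theorem 5.2(ii)]{PaZh}, applied to $BMOA_p = F(p,p-2,1)$, gives (ii) $\Leftrightarrow$ (vii), and the Yuan--Tong result \cite[Corollary 16(3)]{YuTo}, recalled in the introduction above, gives (i) $\Leftrightarrow$ (vii) in the range $1<p<2$.

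It remains only to place (vi) in the chain. The implication (vi) $\Rightarrow$ (v) is immediate from the continuous embedding $BMOA_p \subseteq BMOA$ provided by Lemma 1(2), and therefore (vi) $\Rightarrow$ (vii) via Theorem \ref{thm7}. The only substantive step is (vii) $\Rightarrow$ (vi): given $f \in BMOA$, one must show
\[
\lim_{|I|\to 0}\frac{1}{|I|}\int_{S(I)} |f(z)|^p |g'(z)|^p (1-|z|^2)^{p-1}\,dm(z) = 0.
\]
I would set $d\mu(z) = |g'(z)|^p(1-|z|^2)^{p-1}\,dm(z)$, which under hypothesis (vii) is a vanishing Carleson measure satisfying $\mu(S(I)) \leq C|I|(\log(2/|I|))^{-p}$ for small $|I|$. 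With $z_I = (1-|I|)\xi$ the apex of $S(I)$, split $|f(z)|^p \leq 2^p(|f(z)-f(z_I)|^p + |f(z_I)|^p)$. The constant piece is bounded by combining the BMOA growth estimate $|f(z_I)| \leq C\|f\|_{BMOA}\log(2/|I|)$ with (vii), which forces $|f(z_I)|^p\mu(S(I))/|I| \to 0$. For the oscillatory piece I would apply the Cauchy--Schwarz inequality, exploit $|1-\bar{z}_I z| \simeq |I|$ on $S(I)$, invoke the Carleson embedding (\ref{CarH2}) for $\mu$, and conclude via the Garsia-norm representation (\ref{Garcia}) of $\|f\|_{BMOA}^2$, exactly as in the proof of Theorem \ref{theNec}.

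The main point to verify is that the argument of Theorem \ref{theNec}, originally written for $f \in BMOA_p$, still applies when $f$ is merely in $BMOA$; but a quick inspection shows that the relevant estimates use only the Garsia/BMOA norm of $f$ together with hypothesis (vii) on $\mu_g$, so no modification is required. I therefore expect this to be more of a bookkeeping step than a genuine obstacle, the whole corollary amounting essentially to assembling Theorem \ref{thm7} with the pre-existing characterizations of compactness for $T_g$ on $BMOA_p$ and from $BMOA$ to $BMOA_p$.
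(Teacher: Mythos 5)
Your proposal is correct and follows essentially the route the paper intends: the corollary is stated there as a collection of already-established facts, namely Theorem \ref{thm7} for (iii)$\Leftrightarrow$(iv)$\Leftrightarrow$(v)$\Leftrightarrow$(vii), Pau--Zhao for (ii)$\Leftrightarrow$(vii), Yuan--Tong for (i)$\Leftrightarrow$(vii), and the Garsia-norm argument of Theorem \ref{theNec} (applied with $d\mu=|g'|^p(1-|z|^2)^{p-1}dm$, exactly as the paper does in the $p=1$ case) for (vii)$\Rightarrow$(vi). Your observation that the Theorem \ref{theNec} estimates use only $\|f\|_{BMOA}$ and the hypothesis on $\mu_g$, so they apply verbatim to $f\in BMOA$, is the right and only point needing verification.
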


\subsection{An application to composition semigroups}

 We will apply the above results on $T_g$, with a specific choice of the symbol $g$,
  to characterize the maximal subspace of strong continuity
 $[\phi_t, BMOA_p]$   when the inducing semigroup of functions $\{\phi_t\}$ has
 its Denjoy-Wolff point inside the disc,  thus extending results in \cite[Corollary 2]{bcdmps}.

 Given a semigroup $\{\phi_t \}$ with  Denjoy-Wolff point $b$ and infinitesimal generator $G$, define the
 \textit{associated g-symbol} $\g(z)$ as follows,
\begin{enumerate}
\item If $b\in\D$ let  $\g(z)=\int_b^z \frac{\z -b}{G(\z)}\,d\z$
\item If $b\in \partial \D$ let $\g(z)=\int_0^z \frac{1}{G(\z)}\,d\z$,
\end{enumerate}
and  consider the Volterra operators $T_{\g}$.
Notice that if  $b\in \partial\D$, then  $\g$ coincides with the Koenigs function $h$ associated
to $ \{\phi_t \}$ \cite[page 234]{Sisk}. The function $h$ has the geometric property that for every $w\in h(\D)$, the half-line $\{w+t: t\geq 0\}$ is contained in $h(\D)$. Thus the range of $h$ contains an infinite strip, and an immediate consequence is that $h$ does not belong to the  little Bloch space $\mathcal{B}_0$.
 This implies that $T_{\g}$ cannot be  bounded on $BMOA_p$. For if we assume that $T_{\g}$ is
 bounded then  condition (\ref{logCar}) holds  for $g=\g$.
In particular, since $(\log\frac{2}{|I|})^p\to \infty $ as $|I|\to 0$ we must have
$$
\lim_{|I|\to 0}\frac{1}{| I |} \int_{S(I)} | \g'(z)|^p (1-|z|^2)^{p-1}\,dm(z)=0,
$$
thus  $h=\g\in VMOA_p\subset \mathcal{B}_0$ and this is a contradiction.

\begin{corollary}
Let $1<p < 2$ and $\{\phi_t\}$ a semigroup with Denjoy-Wolff point $b\in \D$ and associated $g$-symbol
$\g(z)$, such that $T_{\g}$ is bounded on $BMOA_p$.
Then $[\phi_t, BMOA_p]=VMOA_p$ if and only if $T_{\g}$ is (weakly) compact on $BMOA_p$.
\end{corollary}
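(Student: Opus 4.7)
The plan is to identify $\{h\in BMOA_p : Gh'\in BMOA_p\}$ (whose $BMOA_p$-closure is $[\phi_t, BMOA_p]$ by (\ref{ssc})) with $T_{\g}(BMOA_p)$ modulo constants, and then invoke Corollary \ref{collect}. The driving identity is
\[
G(z)\,T_{\g}(f)'(z)\,=\,f(z)\,(z-b),\quad f\in BMOA_p,
\]
which is immediate from $\g'(z)=(z-b)/G(z)$.

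First I would establish the set equality $\{h\in BMOA_p : Gh'\in BMOA_p\}=T_{\g}(BMOA_p)+\C$. The inclusion ``$\supseteq$'' is direct: for $h=T_{\g}(f)+c$ with $f\in BMOA_p$, the boundedness of $T_{\g}$ gives $h\in BMOA_p$, and the displayed identity together with the fact that multiplication by the polynomial $(z-b)$ is bounded on $BMOA_p$ gives $Gh'=(z-b)f\in BMOA_p$. For the reverse inclusion, take $h\in BMOA_p$ with $f_0:=Gh'\in BMOA_p$. Since $b\in\D$ is an interior fixed point of each $\phi_t$, we have $G(b)=0$, hence $f_0(b)=0$. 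The map $M:BMOA_p\to\{\phi\in BMOA_p:\phi(b)=0\}$, $Mg=(z-b)g$, is a bounded linear bijection onto a closed subspace (closed as the kernel of the bounded point-evaluation at $b$, which is continuous because $BMOA_p\subset\mathcal{B}$). The open mapping theorem then supplies a bounded inverse, so $f:=f_0/(z-b)\in BMOA_p$. Then $T_{\g}(f)'=f\g'=f_0/G=h'$, i.e.\ $h=T_{\g}(f)+h(0)\in T_{\g}(BMOA_p)+\C$.

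Combining this identification with (\ref{ssc}), the trivial inclusion $VMOA_p\subseteq[\phi_t, BMOA_p]$, the closedness of $VMOA_p$ in $BMOA_p$, and $\C\subset VMOA_p$, I obtain
\[
[\phi_t, BMOA_p]=VMOA_p \iff \{h:Gh'\in BMOA_p\}\subseteq VMOA_p \iff T_{\g}(BMOA_p)\subseteq VMOA_p.
\]
The last condition is item (v) of Corollary \ref{collect}, equivalent there to items (ii) and (iii), that is, to $T_{\g}$ being compact, respectively weakly compact, on $BMOA_p$. This yields the claim.

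The main obstacle is the division by $(z-b)$ in the reverse inclusion. A direct estimate of the $BMOA_p$-norm of $f_0/(z-b)$ is unpleasant since $1/(z-b)$ blows up inside $\D$; the bounded-inverse theorem bypasses this cleanly. It is here that the assumption $b\in\D$ (rather than on $\partial\D$) plays a decisive role: only then is point-evaluation at $b$ continuous on $BMOA_p$, and only then does the quotient $f_0/(z-b)$ remain holomorphic on all of $\D$, so that the Taylor factorization used above even makes sense.
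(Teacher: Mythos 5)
Your overall architecture is sound and, for the "only if" direction, is essentially the paper's own argument: you identify $\{h\in BMOA_p: Gh'\in BMOA_p\}$ with $T_{\g}(BMOA_p)+\C$ via $\g'=(z-b)/G$ and then invoke the equivalence (v)$\Leftrightarrow$(iii) of Corollary \ref{collect}. Where you genuinely diverge is the "if" direction: the paper passes from weak compactness to condition (vii) of Corollary \ref{collect}, observes that $|z-b|\simeq 1$ on small Carleson boxes so that (vii) for $g=\g$ becomes condition (\ref{pLog}) for $G$, and then invokes Theorem \ref{theNec}. You instead run the set identity in both directions, which lets you conclude from (iii)$\Rightarrow$(v) of Corollary \ref{collect} together with the closedness of $VMOA_p$ and the inclusion $VMOA_p\subseteq[\phi_t,BMOA_p]$. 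This is a legitimate and arguably more symmetric route that bypasses the Garsia-norm machinery of Theorem \ref{theNec}; the price is that you now also need the inclusion ``$\supseteq$'', i.e.\ that multiplication by $(z-b)$ preserves $BMOA_p$ -- true (via the logarithmic growth estimate and the Carleson-measure description), but asserted without proof.

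The one step I would not accept as written is your justification that $f_0/(z-b)\in BMOA_p$ when $f_0\in BMOA_p$ and $f_0(b)=0$. You declare that $M g=(z-b)g$ is a bounded bijection of $BMOA_p$ \emph{onto} the closed subspace $\ker(\mathrm{ev}_b)$ and then apply the bounded inverse theorem. But the surjectivity of $M$ onto $\ker(\mathrm{ev}_b)$ -- equivalently, the statement that every $u\in BMOA_p$ with $u(b)=0$ is of the form $(z-b)g$ with $g\in BMOA_p$ -- \emph{is} the division claim you are trying to prove, so the argument is circular; nor is it clear a priori that $\mathrm{ran}(M)$ is closed without already knowing the division result. (To be fair, the paper asserts the equivalence ``$Gf'\in BMOA_p$ iff $Gf'/(z-b)\in BMOA_p$'' with no more justification than the remark $G(b)=0$.) The fact itself is true and the honest proof is a short direct estimate: on $\{|z-b|\ge(1-|b|)/2\}$ one has $|(u/(z-b))'|\lesssim |u'|+|u|$, and the $|u|$ term is handled by the growth estimate $|u(z)|\lesssim\log\frac{2}{1-|z|}\,\n{u}_{BMOA_p}$ together with the Carleson-measure characterization of $BMOA_p$; on the complementary compact neighbourhood of $b$ everything is controlled by Cauchy estimates, and there $(1-|z|^2)^{p-2}$ is bounded. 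With that replacement your proof is complete.
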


\begin{proof}
Suppose  $T_\gamma$ is (weakly) compact. From  Corollary (\ref{collect}), condition
(vii)  holds with $\frac{z-b}{G}=\g'$, and consequently Theorem \ref{theNec}  gives the desired conclusion.

Conversely, suppose that $[\phi_t, BMOA_p]=VMOA_p$. Note that $f'/\g'=Gf'/(z-b)$ and that, since
$G(b)=0$, we have $Gf'\in BMOA_p$ if and only if $Gf'/(z-b)\in BMOA_p$.
By the assumption and from (\ref{ssc}), we have that
$$
VMOA_p = \overline{\{f\in BMOA_p:\;Gf'\in BMOA_p\}}
$$
Now if $f\in BMOA_p$ is such that $Gf'\in BMOA_p$, this is equivalent to $f'/\g'=m\in BMOA_p$
and further to
$$
f(z)=\int_0^zm(\z)\g'(\z)\,d\z +c, \quad c\, \,\text{a constant},
$$
for some  $m(z)\in BMOA_p$. Thus $f\in BMOA_p\cap (T_{\g}(BMOA_p)\oplus\C)$. But $T_{\g}$
is bounded on $BMOA_p$ so this intersection is just the image $T_{\g}(BMOA_p)$ plus the constants. Since $VMOA_p$ is closed, it follows that
$T_{\g}(BMOA_p)\subset VMOA_p$ and Corollary \ref{collect} then says that
 $T_{\g}$ is (weakly) compact on $BMOA_p$.
\end{proof}
\subsection{Acknowledgements} The authors would like to thank professor A.G. Siskakis for carefully reading this manuscript and for his usefull suggestions that helped to improve this article.

\bibliographystyle{amsplain}

\begin{thebibliography}{99}

\bibitem{Al.Sisk 1} A. Aleman and A. Siskakis, An integral operator on $H^p$,
Complex Variables, 28 (1995), 140-158.

\bibitem{AlBu} C. Aliprantis, O. Burkinshaw, \emph{Positive operators}, Springer, Dordrecht, (2006)

\bibitem{AJS} A. Anderson, M. Jovovic, W. Smith, \emph{Composition semigroups on BMOA and $H^\infty$},
J. Math. Anal. Appl, 449(1) (2017), 843-852

\bibitem{AuLa}R. Aulaskari, P. Lappan, \emph{Criteria for an analytic function to be Bloch and
a harmonic or meromorphic function to be normal}, Complex analysis and its Applications,
Pittman Research notes in Mathematics, Series 305, Longman, Harlow (1994), 136-146

\bibitem{AuTo}R. Aulaskari, L. Tovar, \emph{On the function spaces $B_p$ and $Q_p$},
Bull. Hong Kong Math. Soc., 1(1997), 203-208

\bibitem{AuXiZh}R. Aulaskari, J. Xiao, R. Zhao, \emph{On subspaces and subsets of BMOA and UBC},
Analysis 15(1995), 101-121


\bibitem{Berkson} E. Berkson, H. Porta, \emph{Semigroups of analytic functions and composition operators},
Michigan Math. J., 25(1978), 101-115

\bibitem{bcdms} O. Blasco, M. Contreras, S. D\'iaz-Madrigal, J. Martinez, A. Siskakis,
\emph{Semigroups of composition operators in BMOA and the extension of a theorem of Saranson},
Integr. equ. oper. theory, 61(2008), 45-62

\bibitem{bcdmps} O. Blasco, M. Contreras, S. D\'iaz-Madrigal, J. Martinez, M. Papadimitrakis,
A. Siskakis, \emph{Semigroups of composition operators and integral operators in spaces of analytic functions},
Ann. Acad. Sci. Fenn. Math., 38(2013), 1-23

\bibitem{bmoa}P. Bourdon, J. Cima, A. Matheson, \emph{Compact composition operators on BMOA},
Trans. Amer. Math. Soc. 351(1999), 2183-2196

\bibitem{cdmv}M. D. Contreras, S. D\'iaz-Madrigal, D. Vukoti\'c,
\emph{Compact and weakly compact composition operators
    from     the Bloch space into M\"{o}bius invariant spaces}, J. Math. Anal. Appl.,
    Volume 415, (2)(2014), 713-735


\bibitem{DM} N. Danikas,  C. Mouratides, \emph{Blaschke products in $Q_p$ spaces},
 Complex Variables Theory Appl. 43 (2000),  199--209.


\bibitem{Girela}D. Girela, \emph{Analytic functions of bounded mean oscilation}, Complex function spaces, 61-170,
Univ. Joensuu Dept. Math. Rep. Ser., 4(2001)

\bibitem{GiPe} D. Girela and J. A.  Pelaez, \emph{Carleson measures, multipliersand integration operators for spaces of Dirichlet type} J. Funct. Anal. 241 (2006), 334--358.

\bibitem{GPV} D. Girela, J.A. Pel\'aez and D.  Vukotic, \textit{Integrability of the derivative
of a Blaschke product}, Proc. Edinb. Math. Soc. (2) 50 (2007), 673--687.

  \bibitem{Laitila} J. Laitila, S. Miihkinen and P. Nieminen, \textit{Essential norms and weak
  compactness of integration operators} Arch. Math. (Basel) 97 (2011), 39--48.


\bibitem{LP} M. Lindstr\"{o}m and N. Palmberg, \emph{Duality of a large family of analytic function spaces},
 Ann. Acad. Sci. Fenn. Math. 32 (2007),  251--267.



\bibitem{Lueck}D. Luecking, \emph{Representation and Duality in Weighted Spaces of Analytic Functions},
Indiana University Mathematics Journal, 34(2) (1985), 319-336


\bibitem{PaZh}J. Pau, R.Zhao \emph{Carleson Measures, Riemann Stieltjes and Multiplication
operators  on a general family of function spaces}, Integral Equations and Operator Theory, 78(2014), 483-514

\bibitem{PerRat}F. Perez-Gonzalez, J. R\"{a}tty\"{a}, \emph{Inner functions in the M\"{o}bius
invariant Besov-type spaces}, Proc. Edin. Math. Soc., 52(2009), 751-770

\bibitem{PerRat2}F. Perez-Gonzalez, J. R\"{a}tty\"{a}, \emph{Univalent functions in Hardy,
Bergman and related spaces}, Journal d' Analyse Mathematique, 105(1)(2008), 125-148

\bibitem{Perf1} K. M. Perfekt, \emph{Duality and Distance Formulas in Spaces Defined by Means of Oscillation},
Arkiv f\"{o}r Matematik 51 (2013), 345-361.

\bibitem{Perf2} K. M. Perfekt, \emph{Weak Compactness of Operators Acting on o-O Type Spaces},
Bulletin of the London Mathematical Society 47 (2015), 677-685

\bibitem{Pomm} Ch. Pommerenke, \emph{Boundary Behaviour of Conformal Maps},
 Springer-Verlag, Berlin, 1992

\bibitem{Pom} C. Pommerenke, Schlichte funktionen und analytische funktionen von beschrankter
mittlerer oszillation, Comment. Math. Helv., 52 (1977), 591-602

\bibitem{Ratt}J. R\"{a}tty\"{a}, \emph{On some complex function spaces and classes},
Annales Acad. Sci. Fenn. Math. Diss., 124(2001), 1-73

\bibitem{Sar} D. Sarason, \emph{Functions of vanishing mean oscilation},
Trans. Amer. Math. Soc., 207(1975), 391-405

\bibitem{Sisk} A. Siskakis, \emph{Semigroups of composition operators on spaces of analytic
functions, a review}, Contemp. Math. 213(1998), 229-252


\bibitem{Sisk3} A. Siskakis, \emph{Semigroups of composition operators in Bergman spaces},
Bull. Austral. Math. Soc., 35(1987), 397-406

\bibitem{Sisk4} A. Siskakis, \emph{Semigroups of composition operators on the Dirichlet space},
Results Math., 30(1996), 165-173.

\bibitem{SiskZha} A.Siskakis, R. Zhao \emph{Voltera type operators on spaces of analytic functions},
 Contemp. Math., 232(1999), 299-312

\bibitem{Tjani}  M. Tjani, \emph{Compact composition operators on Besov spaces},
Trans. Amer. Math. Soc. 355 (2003),  4683--4698.

\bibitem{Vin} S. Vinogradov, \emph{Multiplication and division in the space of analytic functions
with area integrable derivative, and in some related spaces}, Journal
 of Mathematical Sciences, 87 (5) (1997), 3806-3827

\bibitem{WirthXiao} K. Wirths, J. Xiao, \emph{Recognizing $Q_{p,0}$ functions per
Dirichlet space structure}, Bull. Belg. Math. Soc. 8(2001), 47-59


\bibitem{Wu} Z. Wu, \emph{Carleson measures and multipliers of Dirichlet spaces},
J. Funct. Anal., 169(1999), 148-163


\bibitem{Xiao1} J. Xiao, \emph{Holomorphic Q classes}, Springer - Verlag,
Lecture notes in Mathematics, (2001)

\bibitem{Xiao2} J. Xiao, \emph{Geometric $Q_p$ functions}, Birkhauser Verlag, (2006)

\bibitem{YuTo} C. Yuan, C. Tong, \emph{On analytic Campanato and F(p,q,s) spaces},
Complex Anal. Oper. Theory, 12 (2018), 1845-1875

\bibitem{Zhao} R. Zhao, \emph{On a general family of function spaces} ,
Ann. Acad. Sci. Fenn. Math. Diss., 105(1996), 1-56

\end{thebibliography}

\end{document}